\newtheorem{theorem}{Theorem}[section]
\theoremstyle{definition}
\newtheorem{definition}[theorem]{Definition}
\theoremstyle{plain}
\newtheorem{proposition}[theorem]{Proposition}
\theoremstyle{plain}
\newtheorem{lemma}[theorem]{Lemma}
\theoremstyle{plain}
\newtheorem{corollary}[theorem]{Corollary}
\theoremstyle{plain}
\newtheorem{remark}[theorem]{Remark}
\def\diff{\mathrm{d}}
\def\nat{\mathbb{N}}
\def\hra{\hookrightarrow}
\def\measurable{\mathcal{M}}
\def\measurablep{\mathcal{M}_+}
\def\quasilorentz{m_\varphi}
\newcommand{\EofXint}[1]{\frac{t}{\rVert E_t \rVert_#1}}
\newcommand{\smallmnormdefof}[2]{
\sup_{t \in (0, \infty)} #1 #2}
\def\varphinorm{\lVert\cdot\rVert_{M_\varphi}}
\def\varphinormq{\lVert\cdot\rVert_{\quasilorentz}}
\def\varphinormqdef{\sup_{t\in(0,\infty)}\varphi(t)f^{*}(t)}
\newcommand{\varphinormof}[1]{\lVert#1\rVert_{M_\varphi}}
\newcommand{\varphinormqof}[1]{\lVert#1\rVert_{\quasilorentz}}
\newcommand{\varphinormdefof}[1]{\sup_{t\in(0,\infty)}\varphi(t)#1^{**}(t)}
\newcommand{\varphinormqdefof}[1]{\sup_{t\in(0,\infty)}\varphi(t)#1^{*}(t)}
\def\aoperator{\int_0^\infty a(st)f(s) \diff s}
\begin{document}

\title{Optimality of function spaces for kernel integral operators}

\author{Jakub Tak\'a\v c
     }

\date{November 2022}

\begin{titlingpage}
    \maketitle
    \begin{abstract}
         We explore boundedness properties of kernel integral operators acting on rearrangement-invariant (r.i.)~spaces. In particular, for a given r.i.~space $X$ we characterize its optimal range partner, that is, the smallest r.i.~space $Y$ such that the operator is bounded from $X$ to $Y$. We apply the general results to Lorentz spaces to illustrate their strength.
    \end{abstract}
\end{titlingpage}

\bibliographystyle{abbrv.bst}

\section*{Introduction}
\addcontentsline{toc}{chapter}{Introduction}
The \textit{Laplace transform} is defined by
\[
\mathcal L f(t)=\int_0\sp{\infty}f(s)e\sp{-ts}\,\diff s
\]
for every real-valued function $f$ on $(0,\infty)$ for which the integral makes sense, and for every $t\in(0,\infty)$. It is well known that $\mathcal L$ is an important integral operator with plenty of applications for example in the theory of differential equations, probability theory, investigation of spectral properties of pseudo-differential operators or the study of Fredholm integral equations (see for instance \cite{Andersen2, N, Sharpley, BoSh, MW}). The Laplace transform can be viewed as a~particular instance of a~fairly more general class of \textit{kernel} integral operators
\[
Kf(t)=\int_0\sp{\infty}f(s)k(s,t)\,\diff s,
\]
where $k$ is an appropriate measurable function of two variables (to obtain the Laplace transform we set $k(s,t)=e^{-st}$ for $s,t\in(0,\infty)$).

In this text we focus on problems concerning sharp action of kernel operators of a particular type, namely operators defined by
\[
S_af(t)=\int_0\sp{\infty}f(s)a(st)\,\diff s,
\]
for all real-valued functions $f$ on $(0,\infty)$ for which the integral makes sense, where $a\colon(0,\infty)\to[0,\infty)$ is an appropriate function of one variable. Various particular types of such operators and their numerous modifications have been studied by many authors. To name just a few classical ones, see \cite{Andersen,Carro,Lai} and the references therein.

Our goal here is to investigate fine properties of operators of type $S_a$ acting on the so-called rearrangement-invariant (r.i.~for short) spaces. In these spaces, sometimes in literature called also symmetric spaces, or K\"othe spaces, the decisive parameter is always the~\textit{size} of a~function (rather than other properties such as continuity, smoothness or oscillation). They are built upon an idea of axiomatization of good properties enjoyed by Lebesgue spaces, which constitute a central subclass of r.i.~spaces, but in themselves are not rich enough in order to satisfactorily describe action of important operators and embeddings, especially in their various critical or limiting states. With a~certain licence it can be said that the norms in r.i.~spaces take into account only the measure of level sets of a~given function.

To be more precise, we will concentrate on the questions of existence and eventual characterization of the optimal partner space for an operator of type $S_a$. By the optimal range partner space, supposing a domain space $X$ is given, we mean a space $Y$ such that $S_a$ takes boundedly $X$ to $Y$, and $Y$ is the smallest possible such space. Analogously we define the optimal domain partner when the range space is fixed. To make such questions sensible one has to, of course, state the pool of competing spaces - here this will always be that of r.i. spaces.

Let us note that while action of kernel operators on function spaces has been studied, little attention has been so far paid to optimality of such results. On the other hand an extensive research of optimality of function spaces in different situations (e.g. in Sobolev embeddings, trace embeddings, Gaussian measure space, probability spaces etc., see for example \cite{Cianchi, Bravo, CM1, CP-TAMS, CR1, CwPu, DS, EKP, KP2, Mus1, Mus2, P}) could be seen mainly during last two decades. There are few exceptions, namely papers in which optimality of spaces for integral operators is studied, for example \cite{BEP}, \cite{EMMP} or \cite{STZ}, \cite{P}.

To fulfil our goal, we will combine known techniques with certain new ones which we have to develop. In particular, we will calculate the Peetre $K$ functional for certain specific pairs of spaces (this has been known only partially) and we will, in an efficient way, combine the notion of the Marcinkiewicz-type endpoint space with the norm of the dilation operator. Such methods have not been known before and we shall show that they lead to quite a fruitful theory.

The paper is structured as follows. In Section~\ref{CH:1} we fix notation and collect all the preliminary stuff including all the definitions and basic knowledge about the function spaces, operators and related topics. We give a detailed definition of r.i. spaces and recall everything we shall need to know about them. We also define particular function spaces which will be used in examples to illustrate the abstract results. In Section~\ref{CH:2} we present several background results that will be needed in the proofs of the main results. In particular, we introduce here two types of Marcinkiewicz endpoint spaces and study their elementary but useful properties. We also establish certain important relations concerning the Peetre $K$-functional in the spirit of~\cite{Milman}.

Main results are stated and proved in~Section~\ref{CH:3}. We tackle the problem of the very existence of the partner space, and in the affirmative cases we characterize or construct the optimal one. Furthermore we present a series of results based on spaces given in terms of the norm of the dilation operator on the domain space. To this end we establish several results of independent interest, providing Calder\'on-type estimates for the operators in question. The final section contains a~thorough and comprehensive analysis of the action of operators $S_a$ on Lorentz spaces.

\section{Preliminaries}\label{CH:1}

In this section we recall some definitions and basic properties of
rearrangement-invariant spaces. The standard reference is \cite{BS}.

We denote by $m$ the Lebesgue measure on $(0,\infty)$ and define
\[
	\measurable = \{\hbox{$f:\, (0,\infty) \to [-\infty , \infty]$: $f$ is 			Lebesgue-measurable in $(0,\infty)$}\},
\]
and
\[
	\measurablep = \{f \in \measurable:\, f \geq 0\}.
\]

The \textit{distribution function} $f_*: (0,\infty) \rightarrow [0, \infty]$ of a function $f \in \measurable$ is defined as
\[
	f_*(\lambda) = |\{x\in (0,\infty):\, |f(x)| > \lambda \}|, \, \lambda \in (0,\infty),
\]
where $|\cdot|$ denotes the Lebesgue measure,
and the \textit{non-increasing rearrangement} $f\sp* : (0,\infty) \to [0, \infty ]$ of a function $f\in \measurable$  is defined as
\[
f^*(t)=\inf\{\lambda\in(0,\infty):\, f_*(\lambda) \leq t\},\, t\in(0,\infty).
\]
The operation $f\mapsto f^*$ is monotone in the sense that $|f|\leq |g|$ a.e. in $(0,\infty)$ implies $f^*\leq g^*$.
We define the \textit{elementary maximal function}
${f^{**}: (0,\infty) \rightarrow [0, \infty]}$ of a function
$f \in \measurable$ as
\[
	f^{**}(t) = \frac{1}{t} \int_0^t f^*(s) \diff s.
\]
While the operation $f \mapsto f^{**}$ is subadditive, that is, for any $f,g \in \measurable$ and $t\in(0,\infty)$ one has
\begin{equation}\label{E:TwoStar-subadditivity}
	(f+g)^{**}(t) \leq f^{**}(t) + g^{**}(t),
\end{equation}
for $f \mapsto f^*$ one only has the following property.
Let $s,t\in(0,\infty)$ and $f,g \in \measurable$, then
\begin{equation}\label{E:dec-rearrangement-weak-subaditivity}
(f+g)^*(s+t) \leq f^*(t) + g^*(s).
\end{equation}
We recall that for every $f\in\measurablep$ and every $t\in(0,\infty)$, one has
\begin{equation}\label{E:estimate-of-distribution-function}
	|\{s\in(0,\infty):\,f(s)>f^*(t)\}|\leq t.
\end{equation}
The \textit{Hardy-Littlewood inequality} asserts that if $f, g \in \measurable$,
then
\begin{equation}\label{E:HL}
\int _0\sp{\infty} |f(t) g(t)| \,\diff t \leq \int _0\sp{\infty} f\sp*(t) g\sp*(t) \diff t.
\end{equation}
Let us fix $f_1,f_2\in\measurablep$. If for any $t\in (0,\infty)$ it holds that
\begin{equation*}
    \int_0^t f_1(s) \diff s \leq \int_0^t f_2(s) \diff s,
\end{equation*}
then Hardy's Lemma asserts that for any $g\in\measurablep$ non-increasing, the following inequality holds,
\begin{equation}
    \label{Hardy-Lemma}
    \int_0^\infty f_1(s)g(s)\diff s \leq \int_0^\infty f_2(s)g(s) \diff s.
\end{equation}
Hardy's inequality asserts that if $p\in(1,\infty)$ and $\alpha>-(1-\tfrac{1}{p})$, then
\[
\int_0^\infty \left(t^{-\alpha-1} \int_0^t f(s) \diff s\right)^p \diff t
\leq \left(\frac{1}{1+\alpha-\tfrac{1}{p}}\right)^p \int_0^\infty \left(f(t)t^{-\alpha}\right)^p \diff t
\quad \text{for $f\in\measurablep$,}
\]
and if $\alpha>1-\tfrac{1}{p}$, then
\[
\int_0^\infty \left(t^{-\alpha+1} \int_t^\infty f(s) \diff s\right)^p \diff t
\leq \left(\frac{1}{-1+\alpha+\tfrac{1}{p}}\right)^p \int_0^\infty \left(f(t)t^{\alpha}\right)^p \diff t
\quad \text{for $f\in\measurablep$.}
\]
This inequality can be found for example in \cite[Chapter 3, Lemma 3.9]{BS}, where a slightly different, but equivalent, formulation is used.

Following \cite{BS} we say that a functional $\rho: \measurablep \to [0,\infty]$ is a \textit{Banach function norm}, if for all $f$, $g$ and
$\{f_j\}_{j\in\nat}$ in $\measurablep$, and every $\lambda \geq 0$, the
following properties hold:
\begin{itemize}
\item[(P1)] $\rho(f)=0$ if and only if $f=0$;
$\rho(\lambda f)= \lambda\rho(f)$; \par\noindent
\qquad $\rho(f+g)\leq \rho(f)+ \rho(g)$;
\item[(P2)] $  f \le g$ a.e.\  implies $\rho(f)\le\rho(g)$;
\item[(P3)] $  f_j \nearrow f$ a.e.\ implies
$\rho(f_j) \nearrow \rho(f)$;
\item[(P4)] $\rho(\chi_G)<\infty$ \ for every $G\subset(0,\infty)$ of finite measure;
\item[(P5)] for every $G\subset(0,\infty)$ of finite measure there is a constant $C_G$ such that $\int_{G} f(t)\diff t \leq C_G \rho(f)$.
\end{itemize}
If also the property
\begin{itemize}
	\item[(P6)] $\rho(f) = \rho(g)$ whenever
	$f^* = g^*$,
\end{itemize}
holds, we say that $\rho$ is a
\textit{rearrangement-invariant Banach function norm}, or just a
\textit{rearrangement-invariant norm}.
If $\rho$ is a rearrangement-invariant norm, then the collection
\[
	X=X(\rho)=\{f\in\measurable:\, \rho(|f|)<\infty\}
\]
is called a \textit{rearrangement-invariant Banach function space}, or just a \textit{rearrangement-invariant space}. The norm on the space $X$ is given by
$\lVert f \rVert_X = \rho(|f|)$. Note that $\rho(|f|)$ is defined for every $f\in\measurable$, and
\[
	f\in X\quad\Leftrightarrow\quad \rho(|f|)<\infty.
\]
We recall that it follows from the axioms that such $X$ is always complete (even if (P6) does not hold), hence the name Banach function space.
For a rearrangement-invariant norm $\rho$ we define its \textit{associate norm} by
\[
	\rho'(g) = \sup\left\{\int_0^\infty f(t)g(t)\diff t:\,f\in\measurablep,\, \rho(f)\leq 1\right\} \quad \text{for $g \in \measurablep$}
\]
This $\rho'$ is also a rearrangement-invariant norm.
Furthermore, it also holds that $\rho'' = \rho$.
If $X = X(\rho)$ is a rearrangement-invariant space and $\rho'$ is the norm associate to $\rho$, then $X(\rho')$ is the \textit{associate space}
of $X$ and is denoted by $X'$.

If $X, Y$ are rearrangement-invariant spaces, we denote by $X\hra Y$ the continuous embedding of $X$ into $Y$ and by $T\colon X\to Y$ the boundedness of an operator $T$ from $X$ to $Y$. We have
\begin{equation}\label{I:Asociate-embedding}
X\hra Y \iff Y' \hra X'.
\end{equation}
Given a linear operator $T$ defined on some subspace of $\measurable$ we call the operator $T'$ adjoint operator to the operator $T$ if 
\begin{equation*}
    \int_0^\infty T(f)g = \int_0^\infty fT'(g)
\end{equation*}
for all $f,g\in\measurable$ for which the left hand side makes sense. Recall that $T\colon X \to Y$ holds if and only if $T'\colon Y' \to X'$.
We say that the rearrangement-invariant space $Y$ is \textit{the optimal range partner} for the linear operator $T$ and a given domain rearrangement-invariant space $X$ if $T\colon X \rightarrow Y$ and for every rearrangement-invariant space $Z$ such that $T\colon  X \rightarrow Z$ it holds that $Y \hra Z$. An operator which will be used extensively throughout this work is the dilation operator $E_t$ defined for any $t \in (0,\infty)$ by the formula
\[
    E_t f (s) = f\left(\frac{s}{t}\right).
\]
We recall that $E_t$ is bounded on every rearrangement-invariant space for any $t \in (0,\infty)$.

We define the \textit{fundamental function}, $\varphi_X$, of a~given rearrangement-invariant space $X$ by $\varphi_X(t)=\|\chi_{(0,t)}\|_X$, $t\in(0,\infty)$.
We say that a function $\varphi\colon (0,\infty) \rightarrow (0,\infty)$ is \textit{quasiconcave} if it is non-decreasing and
$\frac{t}{\varphi(t)}$ is non-decreasing.
We say that the function $\varphi\colon (0,\infty) \rightarrow (0,\infty)$ satisfies the $\Delta_2$ condition, if it is non-decreasing and there exists a constant $C>0$ such that $\varphi(2t)\leq C\varphi(t)$ for all $t>0$.
The fundamental function of any rearrangement-invariant space is quasiconcave. Given a quasiconcave function~$\varphi$, we define the~rearrangement-invariant spaces $M_\varphi, \Lambda_\varphi$ with the rearrangement-invariant norms given by
\begin{equation}\label{D:Marcinkiewitz}
\varphinormof{f} = \varphinormdefof{f}, \, f \in \measurable,
\end{equation}
and
\begin{equation}\label{D:Lorentz}
\lVert f \rVert_{\Lambda_\varphi} =
\int_0^\infty f^*(t)\diff \varphi(t), \, f \in \measurable.
\end{equation}
These are indeed rearrangement-invariant norms and the spaces are called the  \textit{Lorentz endpoint space} and the \textit{Marcinkiewicz endpoint space} respectively. It is also known that both $M_\varphi$ and
$\Lambda_\varphi$ have a~common fundamental function which is equal to $\varphi$.
For a rearrangement-invariant space $X$ we denote
\[
	M(X) = M_{\varphi_X}, \,
	\Lambda(X) = \Lambda_{\varphi_X},
\]
where $\varphi_X$ is the fundamental function of $X$. We recall that for a rearrangement-invariant space $X$ we have
\[
	\Lambda(X) \hra X \hra M(X),
\]
with norms of both embeddings equal to 1.
In other words, the spaces $M_\varphi, \Lambda_\varphi$ are respectively the largest and the smallest rearrangement-invariant space with the fixed fundamental function equal to $\varphi$.

One of the basic examples of an important class of rearrangement-invariant spaces can be obtained by considering general Lorentz $L^{p,q}$ spaces with $p,q \in (0,\infty]$, governed for $f \in\measurable$ by the functional
\[
\rho_{p,q}(f)=
\begin{cases}
\left(\int_0^{\infty}(t^{\tfrac{1}{p}}f^*(t))^q\,\frac{\diff t}{t}\right)^{\frac{1}{q}} &\textup{if}\ 0<q<\infty,\\
\sup_{t\in(0,\infty)}t^{\tfrac{1}{p}}f^*(t)&\textup{if}\ q=\infty.
\end{cases}
\]
We recall that these are equivalent to rearrangement-invariant norms in cases when either $p\in(1,\infty)$ and $q\in[1,\infty]$ or $p=q=1$ or $p=q=\infty$. In case $p=\infty$ and $q<\infty$ the resulting space is a trivial set containing only the zero function. Furthermore, note that $L^{p,p} = L^p$, where $L^p$ is the classical Lebesgue space. For $p\in[1,\infty]$ we define the associated exponent $p'$ by $\tfrac{1}{p}+\tfrac{1}{p'} = 1$. The following equalities hold up to an equivalence of norms and further in this paper will be treated as equalities, disregarding the equivalence constants.
\begin{equation*}
\begin{split}
    &L^{p,1} = \Lambda(L^p)\quad \text{for $p\in[1,\infty)$},\\
    &L^{p,\infty} = M(L^p)\quad \text{for $p\in(1,\infty]$},\\
    &(L^{p,q})' = L^{p',q'} \quad \text{for $p\in(1,\infty)$, $q\in[1,\infty]$ or $p=q=\infty$ or $p=q=1$}.
\end{split}
\end{equation*}
We will also be using the following embeddings:

(a) if $q_1,q_2\in[1,\infty]$ and $q_1\leq q_2$, then
\begin{equation*}
    L^{p,q_1} \hookrightarrow L^{p,q_2}\quad\text{for all $p\in(1,\infty)$,}
\end{equation*}

(b) if $q_1,q_2\in[1,\infty]$, $p_1,p_2\in (1,\infty)$ and $p_1< p_2$, then
\begin{equation*}
    L^1\cap L^{p_1,q_1} \hookleftarrow L^1\cap L^{p_2,q_2} \quad\text{and \quad $L^\infty \cap L^{p_1,q_1} \hookrightarrow L^\infty \cap L^{p_2,q_2}$,}
\end{equation*}

(c) if $q_1,q_2\in[1,\infty]$, $p_1,p_2\in (1,\infty)$ and $p_1< p_2$, then
\begin{equation*}
    L^1 + L^{p_1,q_1} \hookrightarrow L^1 + L^{p_2,q_2} \quad\text{and \quad $L^\infty + L^{p_1,q_1} \hookleftarrow L^\infty + L^{p_2,q_2}$,}
\end{equation*}
where, as per usual, for $X_0$ and $X_1$ rearrangement-invariant spaces, the norm in $X_0 \cap X_1$ is given by $\max\{\lVert \cdot \rVert_{X_0},\lVert \cdot \rVert_{X_1}\}$. For $X_0+X_1$ see below.

Let $X_0$ and $X_1$ be quasi-normed spaces, which are \textit{compatible} in the sense that
they are embedded in some common Hausdorff topological vector space (in our case we are working with the space $\{f \in \measurable,\, |f|<\infty\; \text{a.e.}\}$).
By $X_0+X_1$ we denote the set of all functions $f\in\measurable$ for which there exists a~decomposition $f=g+h$ such that $g\in X_0$ and $h\in X_1$. We equip the space $X_0+X_1$ with the quasinorm
\[
\|f\|_{X_0+X_1}=\inf_{f=g+h}(\|g\|_{X_0}+\|h\|_{X_1}),
\]
where the infimum is taken over all such decompositions. Recall that if $X_0$ and $X_1$ are normed, then $X_0+X_1$ is also normed. For $f\in X_0+X_1$ the \textit{Peetre $K$-functional} is defined by
$$
K(t,f;X_0,X_1):=\inf_{f=g+h}\left(\|g\|_{X_0}+t\|h\|_{X_1}\right)\quad\textup{for}\ t>0.
$$
The function $K$ as a function of variable $t$ is increasing and concave on $(0,\infty)$. Furthermore, the function $t\sp{-1}K(t,f;X_0,X_1)$ is non-increasing on $(0,\infty)$.
Observe that
\begin{equation}\label{E:Switching-Spaces-KFunc}
\frac{1}{t}K(f,t;X_0,X_1) = K(f,\frac{1}{t},X_1,X_0).
\end{equation}
Recall that in the case when $X_0=L\sp1$ and $X_1=L\sp{\infty}$, an exact formula for the $K$ functional is known (see e.g. \cite[Chapter 5, Theorem 1.6]{BS}), namely,
\begin{equation}\label{E:K-traditional}
K(f,t;L\sp1,L\sp{\infty})=\int_0\sp tf\sp*(s)\,ds\quad\textup{for}\ t\in(0,\infty)\ \textup{and}\ f\in(L\sp1+L\sp{\infty}).
\end{equation}

\section{Background results}\label{CH:2}

In this section, we shall establish some results on rearrangement-invariant spaces and $K$ functionals which will be useful later. Let us begin with the definition of a Marcinkiewicz-type space $\quasilorentz$ similar to $M_\varphi$.

\begin{definition}
\label{D:small-m-norm}
Let
$\varphi : (0,\infty) \rightarrow (0,\infty)$ be a function satisfying the $\Delta_2$ condition. We define the functional
$\varphinormq$ for $f \in \measurable$ by the formula

\begin{equation*}
\varphinormqof{f} = \varphinormqdef.
\end{equation*}
We define the space $\quasilorentz$ as the set of all functions $f$ for which the functional $\varphinormqof{f}$ is finite.
\end{definition}

The $\Delta_2$ condition imposed on the function $\phi$ guarantees that $\varphinormqof{\cdot}$ is a quasinorm (see also \cite{CMMP}), therefore we will consider $\quasilorentz$ to be a quasinormed space.
We say that a linear set $M \subset \measurable$ together with a functional $F \colon M \rightarrow [0,\infty)$ can be equivalently renormed with a rearrangement-invariant norm, if there exists a rearrangement-invariant space $X$ and constants $C_1$ and $C_2$ such that $X=M$ and
\begin{equation*}
C_1 \lVert f \rVert_X \leq F(f) \leq C_2 \lVert f \rVert_X \quad \text{for $f \in X$}.
\end{equation*}
If that is the case, we will identify $M=X$ not only set-wise but also as spaces, in other words, we consider $M$ to be a rearrangement-invariant space.
Let $\varphi$ be a quasi-concave function. Then $\quasilorentz$ can be equivalently renormed with a rearrangement-invariant norm $\varphinorm$ if and only if there exists a constant $C>0$ such that
\begin{equation}
\label{RINormCondition}
\int_0^t \frac{1}{\varphi(s)} \diff s \leq C \frac{t}{\varphi(t)} \quad \text{for every $t\in(0,\infty)$.}
\end{equation}
One of the implications can be found for example in \cite[Proposition 7.10.5]{PKJF} and the other is an easy exercise. A more general result can be found in~\cite{CP-TAMS}.

Our next aim is to characterize the $K$-functional for a~pair of Marcinkiewicz spaces. Results in this direction can be found in literature (see e.g.~\cite{Torchinsky, Bennett, Ericsson, Milman}). Here we need to obtain $K(m_\varphi,m_\psi)$ with rather mild conditions on the functions $\varphi, \psi$. In particular, we require little more than that they satisfy the $\Delta_2$ condition.
Our proof of the Proposition \ref{P-Main-Kfunc} is in the spirit of that of~\cite[Theorem~4.1]{Milman}.

\begin{proposition}
\label{P-Main-Kfunc}
Let $\varphi$, $\psi$ $\in \measurablep$ satisfy the $\Delta_2$ condition. Denote by $s$ the function $s(t) = \frac{\varphi(t)}{\psi(t)}$ for $t \in (0,\infty)$. If $s$ is monotone, then there exist constants $C_1$, $C_2$ $>0$ such that for every $t>0$ and $f \in \measurable$
\begin{equation}\label{E:11}
    C_1(\lVert (E_\frac{1}{2} f)^*\chi_A \rVert_{m_\varphi} + t \lVert (E_\frac{1}{2} f)^*\chi_B \rVert_{m_\psi}) \leq
    K(f,t,m_\varphi,m_\psi) \leq
    C_2(\lVert f^*\chi_A \rVert_{m_\varphi} + t \lVert f^*\chi_B \rVert_{m_\psi}),
\end{equation}
where $A=\{u\colon s(u) < t \}$ and $B = (0,\infty) \setminus A$.
\end{proposition}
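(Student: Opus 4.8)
The plan is to establish the two inequalities in \eqref{E:11} separately, the lower one being by far the easier. Fix any decomposition $f=g+h$. Applying \eqref{E:dec-rearrangement-weak-subaditivity} with both arguments equal to $u$ gives $(E_{\frac{1}{2}}f)^*(u)=f^*(2u)\le g^*(u)+h^*(u)$ for every $u>0$. Multiplying by $\varphi(u)$ and using $\varphi(u)<t\psi(u)$ on $A$ yields $\varphi(u)(E_{\frac{1}{2}}f)^*(u)\le\varphi(u)g^*(u)+t\psi(u)h^*(u)\le\lVert g\rVert_{m_\varphi}+t\lVert h\rVert_{m_\psi}$; taking the supremum over $u\in A$ controls $\lVert (E_{\frac{1}{2}}f)^*\chi_A\rVert_{m_\varphi}$. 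Multiplying instead by $t\psi(u)$ and using $t\psi(u)\le\varphi(u)$ on $B$ controls $t\lVert (E_{\frac{1}{2}}f)^*\chi_B\rVert_{m_\psi}$ in exactly the same fashion. Adding the two resulting bounds and passing to the infimum over all decompositions gives the left inequality with $C_1=\tfrac12$. This half uses neither the $\Delta_2$ condition nor the monotonicity of $s$, which is why it comes for free.

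For the upper bound I would first reduce, via the switching identity \eqref{E:Switching-Spaces-KFunc} (which interchanges the two spaces and $t\leftrightarrow 1/t$, hence $s\leftrightarrow 1/s$ and $A\leftrightarrow B$), to the case where $s$ is non-decreasing; the set $\{s=t\}$ is harmless since there $\varphi=t\psi$. Then $A=(0,u_0)$ up to a null set, with $u_0=\sup\{u:s(u)<t\}$; the cases $u_0=0$ (take $g=0$, $h=f$) and $u_0=\infty$ (take $g=f$, $h=0$) are immediate, so assume $u_0\in(0,\infty)$ and set $\lambda:=f^*(u_0)$. The decomposition I would use is the level-set cut at height $\lambda$: put $|g|=(|f|-\lambda)_+$ and $|h|=\min(|f|,\lambda)$, with $g,h$ inheriting the sign of $f$. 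Then $g^*=(f^*-\lambda)_+$ vanishes on $(u_0,\infty)$, so $\lVert g\rVert_{m_\varphi}\le\lVert f^*\chi_A\rVert_{m_\varphi}$, while $h^*=\min(f^*,\lambda)$ equals $f^*$ on $(u_0,\infty)$ and the constant $\lambda$ on $(0,u_0)$, whence $\lVert h\rVert_{m_\psi}\le\lVert f^*\chi_B\rVert_{m_\psi}+\lambda\psi(u_0)$.

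The crux is then to absorb the leftover plateau term $t\lambda\psi(u_0)$ into $\lVert f^*\chi_A\rVert_{m_\varphi}$, and this is exactly where the hypotheses enter. Since $2u_0>u_0$ we have $s(2u_0)\ge t$, i.e.\ $t\psi(2u_0)\le\varphi(2u_0)$, so monotonicity of $\psi$ together with the $\Delta_2$ condition for $\varphi$ (used twice) gives $t\psi(u_0)\le t\psi(2u_0)\le\varphi(2u_0)\le C^2\varphi(u_0/2)$, where $C$ is the $\Delta_2$ constant of $\varphi$. Combining this with $\lambda=f^*(u_0)\le f^*(u_0/2)$ and $u_0/2\in A$ yields $t\lambda\psi(u_0)\le C^2\varphi(u_0/2)f^*(u_0/2)\le C^2\lVert f^*\chi_A\rVert_{m_\varphi}$. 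Putting the three displays together bounds $K(f,t,m_\varphi,m_\psi)\le\lVert g\rVert_{m_\varphi}+t\lVert h\rVert_{m_\psi}$ by $(1+C^2)\lVert f^*\chi_A\rVert_{m_\varphi}+t\lVert f^*\chi_B\rVert_{m_\psi}$, giving the right inequality.

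The main obstacle is precisely this plateau term: the obvious level-set decomposition does not split $f$ cleanly along $A$ and $B$, it leaves $h$ with a constant slab of height $\lambda$ sitting over $A$, and controlling it is what forces both the $\Delta_2$ assumptions and the hypothesis that $s$ be monotone (so that $A$ and $B$ are genuine intervals and a single cut level suffices). I expect the rest of the argument — the reduction to non-decreasing $s$ via \eqref{E:Switching-Spaces-KFunc}, and the standard identities $g^*=(f^*-\lambda)_+$ and $h^*=\min(f^*,\lambda)$ — to be entirely routine.
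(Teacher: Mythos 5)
Your proof is correct, and its overall architecture matches the paper's: for the lower bound you take an arbitrary decomposition and use the doubling inequality $f^*(2u)\le g^*(u)+h^*(u)$ together with the comparison of $\varphi$ and $t\psi$ on $A$ and $B$ (exactly as the paper does, and like the paper you correctly note this half needs neither $\Delta_2$ nor monotonicity); for the upper bound you use the same kind of level-cut decomposition at the height of $f^*$ at the $A$--$B$ boundary (the paper cuts at $f^*(s^\dagger(t))$ with $s^\dagger(t)=\inf s^{-1}(t)$, you at $\lambda=f^*(u_0)$ with $u_0=\sup A$; these are morally the same point). The genuine difference lies in two places. First, the paper treats the non-increasing case of $s$ by rerunning the argument with $s^\dagger$ defined via a supremum, while you reduce to the non-decreasing case through the switching identity \eqref{E:Switching-Spaces-KFunc}, which costs you the small extra observation about the set $\{s=t\}$ that you correctly supply. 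Second, and more substantively, the absorption of the plateau term: the paper bounds $t f^*(s^\dagger(t))\psi(s^\dagger(t))$ by $\sup_{u}f^*(u)\chi_A(u)\varphi(u)$ by appealing to continuity of $\psi$ (a left-limit argument at the boundary point), whereas you absorb $t\lambda\psi(u_0)$ by using $t\psi(2u_0)\le\varphi(2u_0)$ at the point $2u_0\in B$, two applications of the $\Delta_2$ condition for $\varphi$, and evaluation at $u_0/2\in A$. Your variant uses only the stated hypotheses (monotonicity plus $\Delta_2$) and yields an explicit constant $C_2=1+C^2$, which makes it somewhat more self-contained than the paper's continuity appeal, at the price of a slightly larger constant; both arguments hinge on exactly the same obstacle you identify, namely that the level cut leaves $h$ with a constant slab over $A$.
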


\begin{proof}
First, let $f = f_0 + f_1$, $f_0 \in m_\varphi$, $f_1 \in m_\psi$ and $t>0$. Then we have
\begin{equation}
\label{I1-I2-Inequality}
    \lVert f_1 \rVert_{m_\varphi} + t\lVert f_1 \rVert_{m_\psi} \geq \frac{1}{2} (I_1 + I_2),
\end{equation}
where
\begin{equation}
\label{I1-Definition}
    I_1 = \sup_{u>0} f_0^*(u)\chi_A(u)\varphi(u) + t\sup_{u>0} f_1^*(u)\chi_A(u)\psi(u)
\end{equation}
and
\begin{equation}
\label{I2-Definition}
    I_2 = \sup_{u>0} f_0^*(u)\chi_B(u)\varphi(u) + t\sup_{u>0} f_1^*(u)\chi_B(u)\psi(u).
\end{equation}
From the definition of the set $A$, we have
\begin{equation}
\label{I1-Second-Term-Estimate}
    t\sup_{u>0} f_1^*(u)\chi_A(u)\psi(u) \geq \sup_{u>0} f_1^*(u)\chi_A(u)\varphi(u),
\end{equation}
therefore, combining \eqref{I1-Definition} with \eqref{I1-Second-Term-Estimate}, we obtain
\begin{equation}
\label{I1-Estimate}
    \begin{split}
        I_1 \geq \sup_{u>0} (f_0^*+f_1^*)(u)\chi_A(u)\varphi(u)
            \geq \sup_{u>0} (f_0+f_1)^*(2u)\chi_A(u)\varphi(u)
            =\lVert (E_{\tfrac{1}{2}} f^*) \chi_A \rVert_{m_\varphi},
    \end{split}
\end{equation}
for some positive constant $C$.

Now from the definition of the set $B$, it is easy to see that
\begin{equation*}
    \sup_{u>0} f_0^*(u)\chi_B(u)\varphi(u) \geq t\sup_{u>0} f_0^*(u)\chi_B(u)\psi(u),
\end{equation*}
which in combination with \eqref{I2-Definition} immediately gives
\begin{equation}
\label{I2-Estimate}
    \begin{split}
        I_2 &\geq t\sup_{u>0} f_0^*(u)\chi_B(u)\psi(u) + t\sup_{u>0} f_1^*(u)\chi_B(u)\psi(u) \\
        &\geq t \sup_{u>0} (f_0^*+f_1^*)(u) \chi_B(u)\psi(u) \\
        &\geq t \sup_{u>0} (f_0^*+f_1^*)(2u) \chi_B(u)\psi(u) \\
        &=t\lVert (E_{\tfrac{1}{2}} f^*) \chi_B(u) \rVert_{m_\psi},
    \end{split}
\end{equation}
for some positive constant $C$. Since the decomposition $f = f_0 + f_1$ was arbitrary, combining \eqref{I1-Estimate} and \eqref{I2-Estimate} together with \eqref{I1-I2-Inequality} gives the first inequality.

For the second inequality we will assume $s$ to be non-decreasing and define
$s^\dagger(t) = \inf s^{-1}(t)$. If $s$ is non-increasing the proof is analogous with $s^\dagger$ defined using supremum. Decompose $f = f_0+f_1$, where
\begin{equation*}
    f_0= \begin{cases} f-f^*\left(s^{\dagger}(t)\right) & \text{if } f > f^*\left(s^{\dagger}(t)\right) \\
                      0                                    & \text{otherwise}.     %
     \end{cases}
\end{equation*}
Then we have
\begin{equation}
\label{E-Kfunc-First-Member}
\begin{split}
    t\lVert f_1^* \rVert_{m_\psi}
    &\leq t f^*(s^\dagger(t))\psi(s^\dagger(t)) + t\lVert f^*\chi_B \rVert_{m^\psi} \\
    &\leq \sup_{u>0} f^*(u)\chi_A(u)\varphi(u) + t\lVert f^*\chi_B \rVert_{m^\psi},
\end{split}
\end{equation}
where the last inequality is a consequence of continuity of $\psi$.
We also have
\begin{equation}
\label{E-Kfunc-Second-Member}
    \lVert f_0^* \rVert_{m_\varphi} \leq \sup_{u>0} f^*(u)\chi_A(u)\varphi(u)
\end{equation}
Combining \eqref{E-Kfunc-First-Member} and \eqref{E-Kfunc-Second-Member} gives the second inequality.
\end{proof}

Let us point out that for the first inequality in~\eqref{E:11} the assumption of monotonicity of $s$ is not needed.


In the proofs of the main results we will also require a characterization of the $K$-functional for a Marcinkiewicz space and $L^{\infty}$. Again, various versions of this result are known and scattered in literature but not in the precise form which we need. Therefore, for a reader's convenience we insert the proof.

\begin{proposition}
\label{P-Linf-Smallm-K-functional}
Let $\varphi\colon(0,\infty)\to(0,\infty)$ be a~strictly increasing, left continuous function satisfying the $\Delta_2$ condition. Then
\begin{equation}\label{E:K-inequality}
    \varphinormqof{\chi_{(0,\varphi^{-1}(t))}f^*} \leq
    K(f,t;\quasilorentz,L^\infty) \leq
    2\varphinormqof{\chi_{(0,\varphi^{-1}(t))}f^*}
\end{equation}
for every $f\in\measurable$ and every $t\in(0,\infty)$.
\end{proposition}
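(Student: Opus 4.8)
The plan is to compute the $K$-functional directly from its definition by exhibiting, for the upper bound, a concrete decomposition $f = f_0 + f_1$ with $f_1 \in L^\infty$, and for the lower bound, to show that every admissible decomposition costs at least the quantity on the left. Throughout I will use that, since $\varphi$ is strictly increasing and left continuous, $\varphi^{-1}$ makes sense as the (left-continuous) inverse and $\varphi(\varphi^{-1}(t)) = t$ for $t$ in the range of $\varphi$; the $\Delta_2$ condition is what keeps $\varphinormq$ a quasinorm so that $\quasilorentz$ is a genuine quasinormed space and the expression $K(f,t;\quasilorentz,L^\infty)$ is meaningful.

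For the \emph{upper bound}, set $\lambda = f^*(\varphi^{-1}(t))$ and decompose $f = f_0 + f_1$ in the usual truncation manner: $f_0 = (\lvert f\rvert - \lambda)_+ \operatorname{sgn} f$ and $f_1 = f - f_0$, so that $\lvert f_1 \rvert \le \lambda$ pointwise, hence $\lVert f_1 \rVert_{L^\infty} \le \lambda = f^*(\varphi^{-1}(t))$, and $f_0^*(s) = (f^*(s) - \lambda)_+$, which vanishes for $s \ge \varphi^{-1}(t)$. Thus $f_0^* = f_0^* \chi_{(0,\varphi^{-1}(t))} \le f^* \chi_{(0,\varphi^{-1}(t))}$, giving $\varphinormqof{f_0} \le \varphinormqof{\chi_{(0,\varphi^{-1}(t))} f^*}$. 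For the $L^\infty$ term, $t \lVert f_1 \rVert_{L^\infty} \le t f^*(\varphi^{-1}(t)) = \varphi(\varphi^{-1}(t)) f^*(\varphi^{-1}(t)) \le \sup_{u \in (0,\varphi^{-1}(t))} \varphi(u) f^*(u) = \varphinormqof{\chi_{(0,\varphi^{-1}(t))} f^*}$, where I use that $\varphi(u) f^*(u) \to \varphi(\varphi^{-1}(t)) f^*(\varphi^{-1}(t))$ as $u \uparrow \varphi^{-1}(t)$ by left continuity of $\varphi$ and monotonicity; more carefully, $\varphi$ is left continuous and $f^*$ is non-increasing, so $\sup_{u<\varphi^{-1}(t)}\varphi(u)f^*(u) \ge \sup_{u<\varphi^{-1}(t)}\varphi(u)\cdot f^*(\varphi^{-1}(t)) = t f^*(\varphi^{-1}(t))$. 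Summing the two estimates yields $K(f,t;\quasilorentz,L^\infty) \le \varphinormqof{f_0} + t\lVert f_1\rVert_{L^\infty} \le 2\varphinormqof{\chi_{(0,\varphi^{-1}(t))} f^*}$.

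For the \emph{lower bound}, take an arbitrary decomposition $f = g + h$ with $g \in \quasilorentz$, $h \in L^\infty$. Using \eqref{E:dec-rearrangement-weak-subaditivity} in the form $f^*(2u) \le g^*(u) + h^*(u) \le g^*(u) + \lVert h \rVert_{L^\infty}$, I estimate, for $u < \varphi^{-1}(t)$, the quantity $\varphi(u) f^*(u)$; but I must be slightly careful with the factor $2$. The cleaner route is to bound $\varphi(u) f^*(u) \le \varphi(u) g^*(u) + \varphi(u)\lVert h\rVert_{L^\infty}$ for $u \in (0,\varphi^{-1}(t))$, whence taking the supremum over such $u$ gives $\varphinormqof{\chi_{(0,\varphi^{-1}(t))} f^*} \le \varphinormqof{g} + \lVert h\rVert_{L^\infty} \sup_{u<\varphi^{-1}(t)}\varphi(u) \le \varphinormqof{g} + t\lVert h\rVert_{L^\infty}$, the last step because $\varphi(u) < t$ for $u < \varphi^{-1}(t)$ (strict monotonicity and left continuity of $\varphi$). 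Taking the infimum over all decompositions $f = g+h$ gives $\varphinormqof{\chi_{(0,\varphi^{-1}(t))} f^*} \le K(f,t;\quasilorentz,L^\infty)$.

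The step I expect to be the main nuisance — not deep, but requiring care — is the handling of the endpoint $u = \varphi^{-1}(t)$ and the identity $\varphi(\varphi^{-1}(t)) = t$: one must make sure that left continuity of $\varphi$ is genuinely being used to replace the supremum of $\varphi(u)f^*(u)$ over $u<\varphi^{-1}(t)$ by the value $tf^*(\varphi^{-1}(t))$ in the upper bound and, conversely, that $\sup_{u<\varphi^{-1}(t)}\varphi(u) \le t$ (rather than $<t$ being all we can say pointwise) in the lower bound. If $t$ lies outside the range of $\varphi$ a separate but routine argument via $\varphi^{-1}(t) = \sup\{u : \varphi(u) < t\}$ handles it, and the $\Delta_2$ condition plays no role in the inequalities themselves beyond ensuring the ambient space is well defined.
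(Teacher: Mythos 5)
Your proof is correct and follows essentially the same route as the paper: the same truncation of $f$ at height $f^*(\varphi^{-1}(t))$ for the upper bound, and the same pointwise estimate $f^*(u)\leq g^*(u)+\lVert h\rVert_{L^\infty}$ combined with $\sup_{u<\varphi^{-1}(t)}\varphi(u)\leq t$ for the lower bound. The only step you assert without proof, namely $f^*(u)\leq g^*(u)+\lVert h\rVert_{L^\infty}$, is exactly what the paper verifies via distribution functions, and it also follows from the cited inequality \eqref{E:dec-rearrangement-weak-subaditivity} by letting the second argument tend to $0$, so this is a negligible omission.
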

\begin{proof}
Let $f \in (\quasilorentz + L^\infty)$ and $t>0$. Both $L^\infty$ and $\quasilorentz$ norms are defined in terms of $f^*$ so it will suffice to prove the assertion assuming that $f \geq 0$. First, decompose $f=f_0+f_1$, where

\begin{equation*}
f_0= \begin{cases} f-f^*\left(\varphi^{-1}(t)\right) & \text{if } f > f^*\left(\varphi^{-1}(t)\right) \\
                      0                                    & \text{otherwise}.     %
     \end{cases}
\end{equation*}
Then since $\varphi$ is left continuous, we have
\begin{equation*}
\begin{split}
\sup_{0<s<\varphi^{-1}(t)}f^*(s)\varphi(s)
    & \geq \lim_{s\to{\varphi^{-1}(t)}_-} f^*(s)\varphi(s)
        \\
    & = \lim_{s\to{\varphi^{-1}(t)}_-} f^*(s) \lim_{s\to{\varphi^{-1}(t)}_-} \varphi(s)
        \\
    & \geq f^*(\varphi^{-1}(t))\varphi(\varphi^{-1}(t)) = f^*(\varphi^{-1}(t)) t.
\end{split}
\end{equation*}
And so from the definition of $f_0$ and the above calculation
\begin{equation}\label{e1}
\begin{split}
t {\lVert f_1 \rVert}_\infty \leq f^*(\varphi^{-1}(t)) t \leq
\sup_{0<s<\varphi^{-1}(t)}f^*(s)\varphi(s) = \varphinormqof{\chi_{(0,\varphi^{-1}(t))}f^*}.
\end{split}
\end{equation}
We continue by estimating $\varphinormqof{f_0}$. By definition of $f_0$,
\begin{equation}
\label{e2}
\begin{split}
\varphinormqof{f_0} = \varphinormqdefof{f_0}
\leq \sup_{0<s<\varphi^{-1}(t)}f^*(s)\varphi(s)
= \varphinormqof{\chi_{(0,\varphi^{-1}(t))}f^*}.
\end{split}
\end{equation}
Combining (\ref{e1}) and (\ref{e2}) we obtain
\begin{equation}
\label{leq}
K(f,t;\quasilorentz,L^\infty) \leq 2 \varphinormqof{\chi_{(0,\varphi^{-1}(t))}f^*},
\end{equation}
establishing the second inequality in~\eqref{E:K-inequality}. For the first one, once again, fix $f \in (\quasilorentz + L^\infty)$ non-negative and let $f=g+h$, where $g \in \quasilorentz$ and $h \in L^\infty$. Firstly, we shall assert that
\begin{equation}
\label{e3}
f^*(t) \leq g^*(t) + {\lVert h \rVert}_\infty \quad \text{for every $t\in(0,\infty)$.}
\end{equation}
For $t \in(0,\infty)$, set $\lambda = g^*(t) + {\lVert h \rVert}_\infty$ and
$y=\lvert\{s \in (0,\infty),f(s)>\lambda\}\rvert$. Then
\begin{equation*}
\begin{split}
y &= \lvert\{s \in (0,\infty),g(s) + h(s)>\lambda\}\rvert \\
&= \lvert\{s \in (0,\infty),g(s) + h(s)>g^*(t) + {\lVert h \rVert}_\infty\}\rvert \\
&\leq \lvert\{s \in (0,\infty),g(s)>g^*(t)\}\rvert +
\lvert\{s \in (0,\infty), h(s)>{\lVert h \rVert}_\infty\}\rvert \\
&= \lvert\{s \in (0,\infty),g(s)>g^*(t)\}\rvert,
\end{split}
\end{equation*}
since the set $\{s \in (0,\infty), h(s)>{\lVert h \rVert}_\infty\}$ obviously has zero measure. By \eqref{E:estimate-of-distribution-function} we obtain $y\leq t$. By definition of the decreasing rearrangement we get (\ref{e3}). Consequently, from subadditivity of supremum and because $\varphi$ is increasing, we obtain
\begin{equation}
\label{geq}
\begin{split}
&\sup_{0<s<\varphi^{-1}(t)}f^*(s)\varphi(s)\leq
\sup_{0<s<\varphi^{-1}(t)}g^*(s)\varphi(s) +
\sup_{0<s<\varphi^{-1}(t)}{\lVert h \rVert}_\infty\varphi(s) \\
&\leq \sup_{0<s< \infty}g^*(s)\varphi(s) +
{\lVert h \rVert}_\infty \varphi(\varphi^{-1}(t))
=\|g\|_{m_{\varphi}}+t\|h\|_{\infty}.
\end{split}
\end{equation}
Taking infimum over all such representations $f=g+h$, we arrive at
\begin{equation*}
\varphinormqof{\chi_{(0,\varphi^{-1}(t))}f^*}\leq K(f,t;\quasilorentz,L^\infty),
\end{equation*}
as desired. The assertion now follows from the combination of (\ref{leq}) and (\ref{geq}).
\end{proof}

\section{Operator \texorpdfstring{$S_a$}{} on rearrangement-invariant spaces}\label{CH:3}

\begin{definition}
For $a \in \measurablep$ we define the operator $S_a$ by the formula
\begin{equation*}
S_a f (t) = \int_0^\infty a(st)f(s) \diff s
\end{equation*}
for those $f \in \measurable$ for which the integral on the right is defined.
\end{definition}

Notice that $S_a$ is a~generalization of the Laplace transform, the~Laplace transform is $S_a$ for the choice $a(t) = e^{-t}$. In this section we will formulate a~characterisation of whether a~target space for the operator $S_a$ exists with a~domain space fixed. We will also show that whenever a~target space exists, optimal space also exists and we can find an~implicit definition of said space using the kernel $a$. We will then attempt to find equivalent definitions of target and optimal spaces using Calder\'on type operators rather then the~function $a$. We shall need some preliminary work. First, note that if
$X$ and $Y$ are rearrangement-invariant spaces and $a\in\measurable$, then
\begin{equation}\label{E:Self-Adjoint-main}
    S_a\colon X \rightarrow Y \iff S_a\colon Y' \rightarrow X'.
\end{equation}
This follows from the fact that $S_a$ is a self-adjoint operator, that is
\begin{equation}\label{E:self-adjoint}
    \int_0^\infty S_a f g = \int_0^\infty f S_a g \quad \text{for all $f,g\in \measurable$}.
\end{equation}

\begin{lemma}
\label{L:Tf-Estimate-With-Asterix}
Let $a$ be a non-increasing, non-negative function on $(0,\infty)$. Then
\begin{equation*}
(S_af)^* \leq S_a(f^*)\quad \text{for all $f \in \measurable$}.
\end{equation*}
\end{lemma}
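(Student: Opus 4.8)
The plan is to first reduce the statement to the case where $f$ itself is non-negative and non-increasing. Since both sides of the claimed inequality depend on $f$ only through $f^*$ — the left-hand side because $(S_af)^*$ is a rearrangement and the right-hand side manifestly — it suffices to compare $(S_a(f^*))^*$ with $S_a(f^{**}) = S_a(f^*)$... wait, more carefully: one wants $(S_af)^* \le S_a(f^*)$, and replacing $f$ by $f^*$ on the left can only help if we know $(S_a f)^* \le (S_a f^*)^*$. So the genuine first step is to establish the monotonicity statement $|f| \le |g|$ implies $(S_a f)^* \le (S_a g)^*$; combined with $|f| \le$ (a rearrangement of $f^*$) via Hardy--Littlewood-type reasoning, this lets us assume $f = f^* \ge 0$.

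Granting that reduction, the heart of the argument is to show: for $f$ non-negative and non-increasing, the function $S_a f$ is already non-increasing, so that $(S_a f)^* = S_a f = S_a(f^*)$ and we are done with equality rather than inequality. To see $S_a f$ is non-increasing when $a$ is non-increasing: for $0 < t_1 < t_2$ and $s > 0$ we have $st_1 < st_2$, hence $a(st_1) \ge a(st_2)$ since $a$ is non-increasing; integrating against the non-negative weight $f(s)$ over $s \in (0,\infty)$ preserves the inequality, giving $S_a f(t_1) = \int_0^\infty a(st_1) f(s)\,\diff s \ge \int_0^\infty a(st_2) f(s)\,\diff s = S_a f(t_2)$. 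A non-negative non-increasing function equals its own decreasing rearrangement (up to the left-continuity normalization, which does not affect $(\cdot)^*$ since rearrangements are invariant under modification on null sets and under the particular choice of right-continuous versus left-continuous representative in the standard convention used here). Hence $(S_a f)^* = S_a f = S_a(f^*)$, which is the desired conclusion in this special case.

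For the general $f$: by the monotonicity of $f \mapsto f^*$ and Hardy--Littlewood one shows that replacing $f$ by $|f|$ does not decrease the left side and does not change the right side, so assume $f \ge 0$. The remaining point is to pass from $f \ge 0$ to $f^*$. Here I would use that $S_a$ is positive and that $\int_0^t f \le \int_0^t f^*$... but $S_a$ is not an averaging operator, so a cleaner route is: the map $h \mapsto (S_a h)^*$ is monotone under the Hardy--Littlewood preorder. Concretely, for each fixed $t$, $S_a f(t) = \int_0^\infty a(st) f(s)\,\diff s \le \int_0^\infty (a(\cdot t))^*(s) f^*(s)\,\diff s$ by \eqref{E:HL}; and since $a$ is non-increasing, $a(\cdot t)$ is non-increasing in its argument, so $(a(\cdot t))^*(s) = a(st)$, giving $S_a f(t) \le S_a(f^*)(t)$ pointwise. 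Thus $S_a f \le S_a(f^*)$ pointwise, hence $(S_a f)^* \le (S_a(f^*))^*$, and by the special case the right side equals $S_a(f^*)$.

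The main obstacle is the bookkeeping around the decreasing rearrangement: verifying that a non-negative, non-increasing function coincides with its own rearrangement (attending to left-continuity conventions and null sets), and confirming that $(a(\cdot t))^* = a(\cdot t)$ when $a$ is non-increasing — both are ``obvious'' but deserve a sentence. Everything else is a short chain of monotonicity and Hardy--Littlewood, so no lengthy computation is expected.
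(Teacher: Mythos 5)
Your argument is correct and essentially the same as the paper's: both rest on $|S_af|\le S_a|f|$, the Hardy--Littlewood inequality together with the fact that $a(\cdot t)$ is non-increasing and hence (a.e.) equal to its own rearrangement, and the observation that $S_a$ applied to a non-negative function is non-increasing, so taking the rearrangement does not increase it. You merely perform the two main steps in the opposite order (Hardy--Littlewood first, then removing the star from $S_a(f^*)$, versus the paper removing the star from $S_a|f|$ first), which is not a substantive difference.
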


\begin{proof}
Taking $t>0$ and $f \in \measurable$, we obtain, by the Hardy--Littlewood inequality (recall that $a$ is non-increasing), that
\begin{equation*}
\begin{split}
(S_af)^*(t) &= (|S_af|)^*(t) \leq (S_a|f|)^*(t) = \int_0^\infty |f(s)|a(st)\diff s \\
& \leq \int_0^\infty f^*(s)a(st)\diff s = S_a(f^*)(t).
\end{split}
\end{equation*}
\end{proof}

\begin{theorem}
\label{T:Range-partner-condition}
Let $X$ be a rearrangement-invariant space and let $a \in \measurable$ be non-increasing. Then $S_a\colon X \to L^1+L^\infty$ if and only if $a^{**}\in X'$. If that be the case, then the expression $\lVert S_a(f^*)\rVert_{X'}$ defines a rearrangement-invariant norm. Let $Z$ be the rearrangement-invariant space such that $\lVert f \rVert_{Z'}=\lVert S_a (f^*)\rVert_{X'}$. We have $S_a\colon X \to Z$.
\end{theorem}

\begin{proof}
We have $a^{**}=S_a(\chi_{(0,1)})$ and by definition
\begin{equation*}
    \lVert a^{**} \rVert_{X'}=\sup_{\lVert f \rVert_X\leq 1}\int_0^\infty |fa^{**}|.
\end{equation*}
Since $a^{**}$ is non-increasing, it follows from the fact that $S_a$ is self-adjoint that
\begin{equation*}
    \begin{split}
         \lVert a^{**} \rVert_{X'}&=\sup_{\lVert f \rVert_X\leq 1}\int_0^\infty f^*a^{**}
         = \sup_{\lVert f \rVert_X\leq 1}\int_0^\infty f^*S_a(\chi_{(0,1)})\\
         &=\sup_{\lVert f \rVert_X\leq 1}\int_0^\infty S_a(f^*)\chi_{(0,1)}
         =\sup_{\lVert f \rVert_X\leq 1}\lVert S_a(f^*)\rVert_{L^1+L^\infty},
    \end{split}
\end{equation*}
where the last equality follows from \eqref{E:K-traditional} and the fact that $S_a(f^*)$ is non-increasing. Now we get from Lemma \ref{L:Tf-Estimate-With-Asterix} that
\begin{equation*}
    \lVert a^{**} \rVert_{X'}=\sup_{\lVert f \rVert_X\leq 1}\lVert S_a(f)\rVert_{L^1+L^\infty}=\lVert S_a \rVert_{X\to L^1+L^\infty}.
\end{equation*}
The first equivalence follows. The proof of the fact that the expression $\lVert S_a (f^*) \rVert_{X'}$ defines a rearrangement-invariant norm is standard and we omit it.

 Since $S_a f^*$ is a non-increasing function we have by Lemma \ref{L:Tf-Estimate-With-Asterix}, the Hardy-Littlewood inequality, \eqref{E:self-adjoint} and the H\"older inequality that
\begin{equation*}
\begin{split}
    \lVert S_a f \rVert_Z
    &= \lVert (S_a f)^* \rVert_Z
    \leq \lVert S_a f^* \rVert_Z
    = \sup_{\lVert g\rVert_{Z'}\leq 1} \int_0^\infty S_a f^* g^* \\
    &= \sup_{\lVert g\rVert_{Z'}\leq 1} \int_0^\infty f^* S_a g^*
    \leq \sup_{\lVert g\rVert_{Z'}\leq 1} \lVert f \rVert_X \lVert S_a g^* \rVert_{X'}
    =\lVert f \rVert_X.
\end{split}
\end{equation*}
In other words, $S_a\colon X \rightarrow Z$.

\end{proof}
Norms defined using the kernel of some integral operator, similarly as in the preceding theorem, were studied before. For a differently defined operator of a type similar to $S_a$ this approach was used for example in \cite[Theorem~2.2]{Kerman} (only in the proof), but optimality of the resulting space (see Corollary \ref{C:Optimal-Space-Using-a}) is not shown. This is the subject of the following remark

\begin{remark}
\label{R:Range-partner-using-R}
Let $a\in\measurablep$ be non-increasing and non-trivial. Let $X$ be a rearrangement-invariant space with $a^{**}\in X'$ and $R$ be an operator defined at least on $\measurablep$. Define
\begin{equation*}
    \rho(f) = \lVert R f^* \rVert_{X'},
\end{equation*}
and assume that this $\rho$ is a rearrangement-invariant norm. Define $Z$ as the rearrangement-invariant space determined by $\rho'$. Consider the following two properties of $R$.
\begin{enumerate}
    \item [(i)] There exists $C>0$ such that for all $f\in\measurable$ it holds that
    \begin{equation*}
        S_a f^*(t)C \leq R f^*(t) \quad \text{for a.e. $t \in (0,\infty)$}
    \end{equation*}
    \item [(ii)] There exists $C>0$ such that for all $f\in\measurable$ it holds that
    \begin{equation*}
        S_a f^*(t)C \geq R f^*(t) \quad \text{for a.e. $t \in (0,\infty)$}
    \end{equation*}
\end{enumerate}
If (i) holds, then $S_a\colon X \rightarrow Z$. If (ii) holds and $Y$ is a rearrangement-invariant space such that $S_a\colon X \rightarrow Y$, then $Z\hookrightarrow Y$.
\end{remark}

The proof of the remark is easy and left to the reader.
Setting $R = S_a$ and using Theorem \ref{T:Range-partner-condition} and Remark \ref{R:Range-partner-using-R}, we arrive at the following corollary.
\begin{corollary}\label{C:Optimal-Space-Using-a}
Let $a\in\measurablep$ be non-increasing and let $X$ be a rearrangement-invariant space such that $a^{**}\in X'$. The rearrangement-invariant $Z$ space satisfying  $\lVert f \rVert_{Z'} = \lVert S_a f^*\rVert_{X'}$, $f\in\measurable$, is the smallest rearrangement-invariant space for which $S_a\colon X \rightarrow Z$.
\end{corollary}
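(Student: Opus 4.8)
The plan is to deduce Corollary~\ref{C:Optimal-Space-Using-a} by instantiating Theorem~\ref{T:Range-partner-using-R} with the choice $R = S_a$, which makes both hypotheses (i) and (ii) hold trivially (with $C=1$), and then to observe that the conclusions of that theorem for these two hypotheses combine to say exactly that $Z$ is the smallest range partner. First I would check that the setup of Theorem~\ref{T:Range-partner-using-R} is legitimate: $a$ is non-increasing and, being non-trivial as a non-negative function, non-trivial in the sense required; $X$ is a rearrangement-invariant space with $a^{**}\in X'$; and $R=S_a$ is defined at least on $\measurablep$, since for $f\in\measurablep$ the integral $\int_0^\infty a(st)f(s)\diff s$ makes sense (possibly $+\infty$) as an integral of a non-negative function. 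Moreover, by Theorem~\ref{T:Range-partner-condition} the functional $\rho(f) = \lVert S_a f^*\rVert_{X'}$ is a rearrangement-invariant norm, so the extra standing assumption of Theorem~\ref{T:Range-partner-using-R} that ``this $\rho$ is a rearrangement-invariant norm'' is automatically satisfied. Thus $Z$, the rearrangement-invariant space determined by $\rho'$, is well defined.

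Next I would verify the two inequalities. With $R=S_a$ we have $S_a f^*(t)\cdot 1 = R f^*(t)$ for every $f\in\measurable$ and every $t\in(0,\infty)$, so both (i) and (ii) hold with the constant $C=1$. Applying the first assertion of Theorem~\ref{T:Range-partner-using-R} (which uses (i)) we obtain $S_a\colon X\rightarrow Z$. Applying the second assertion (which uses (ii)) we obtain that for every rearrangement-invariant space $Y$ with $S_a\colon X\rightarrow Y$ one has $Z\hookrightarrow Y$. Together these two statements are precisely the definition of $Z$ being the optimal range partner for $S_a$ with domain $X$, i.e.\ the smallest rearrangement-invariant space such that $S_a$ maps $X$ boundedly into it. This completes the argument.

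The only point requiring a moment's care — and the closest thing to an obstacle — is the bookkeeping around which ``$\rho$'' and which ``$Z$'' one is talking about: Theorem~\ref{T:Range-partner-condition} already produces a space $Z$ from $\rho(f)=\lVert S_a f^*\rVert_{X'}$, and Theorem~\ref{T:Range-partner-using-R} with $R=S_a$ produces the same space (its $\rho$ coincides with the $\rho$ of Theorem~\ref{T:Range-partner-condition}), so there is no ambiguity and no need to reconcile two different constructions. A second routine check is that $a^{**}\in X'$ is exactly the hypothesis that both invoked theorems demand, and Theorem~\ref{T:Range-partner-condition} guarantees (from the same hypothesis) that a range partner exists at all, so the collection of $Y$ over which we take the infimum is non-empty and the statement ``$Z$ is the smallest'' is not vacuous. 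With these remarks in place the corollary follows immediately, so the proof is short.
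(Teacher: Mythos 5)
Your proposal is correct and follows essentially the same route as the paper, which likewise obtains the corollary by taking $R = S_a$ in Theorem~\ref{T:Range-partner-using-R} and combining it with Theorem~\ref{T:Range-partner-condition} (which supplies both the fact that $\rho$ is a rearrangement-invariant norm and the boundedness $S_a\colon X\to Z$). Your extra bookkeeping remarks about the two constructions of $Z$ coinciding and the non-vacuousness of minimality are sensible but not a departure from the paper's argument.
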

\begin{definition}
Assume that $X$ is a~rearrangement-invariant space. We denote by $E(X)$ the function given by
\[
E(X)(t) = \frac{t}{\rVert E_t \rVert_{X\to X}}\quad\text{for $t\in(0,\infty)$.}
\]
\end{definition}
\begin{theorem}
\label{T:boundedness-of-Sa-general}
If $X$ is a rearrangement-invariant space, then $m_{E(X)}$ is well defined. Let $a \in \measurablep$ be non-increasing. If $a\in X'$ then
\begin{equation*}
S_a\colon X\to m_{E(X)}.
\end{equation*}
\end{theorem}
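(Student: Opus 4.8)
First I would address why $m_{E(X)}$ is well defined: by Definition \ref{D:small-m-norm} we need $E(X)(t) = t/\lVert E_t \rVert_X$ to be positive, finite, and to satisfy the $\Delta_2$ condition. Positivity and finiteness follow from the fact that $E_t$ is bounded on $X$ with $\lVert E_t \rVert_X \in (0,\infty)$. For $\Delta_2$, the submultiplicativity $\lVert E_{st} \rVert_X \leq \lVert E_s \rVert_X \lVert E_t \rVert_X$ together with $\lVert E_2 \rVert_X < \infty$ gives $E(X)(2t) = 2t/\lVert E_{2t}\rVert_X \geq (2/\lVert E_2\rVert_X)\, t/\lVert E_t\rVert_X$, i.e.\ $E(X)(2t) \geq c\, E(X)(t)$; to get the $\Delta_2$ inequality in the direction actually required ($E(X)(2t) \leq C E(X)(t)$) I would instead use $\lVert E_{1/2}\rVert_X < \infty$ and the estimate $\lVert E_t \rVert_X = \lVert E_{2t}E_{1/2}\rVert_X \leq \lVert E_{2t}\rVert_X \lVert E_{1/2}\rVert_X$, which rearranges to $E(X)(2t) \leq 2\lVert E_{1/2}\rVert_X\, E(X)(t)$. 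So $E(X)$ satisfies $\Delta_2$ and $m_{E(X)}$ makes sense.

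The core of the theorem is the boundedness $S_a\colon X \to m_{E(X)}$. By Lemma \ref{L:Tf-Estimate-With-Asterix}, since $a$ is non-increasing and non-negative, $(S_a f)^* \leq S_a(f^*)$, so it suffices to bound $\lVert S_a f^* \rVert_{m_{E(X)}}$, i.e.\ to show
\[
    \sup_{t\in(0,\infty)} E(X)(t)\, (S_a f^*)(t) = \sup_{t\in(0,\infty)} \frac{t}{\lVert E_t\rVert_X} \int_0^\infty a(st) f^*(s)\,\diff s \leq C \lVert f \rVert_X.
\]
Fix $t > 0$. The plan is to recognize the inner integral as a pairing and move the dilation onto $f$: substituting or using the self-adjointness/Hölder structure, $\int_0^\infty a(st) f^*(s)\,\diff s = \int_0^\infty a(s)\, f^*(s/t)\,\frac{\diff s}{t} = \frac{1}{t}\int_0^\infty a(s)\, (E_t f^*)(s)\,\diff s$. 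Hence
\[
    \frac{t}{\lVert E_t\rVert_X} \int_0^\infty a(st) f^*(s)\,\diff s = \frac{1}{\lVert E_t\rVert_X} \int_0^\infty a(s)\,(E_t f^*)(s)\,\diff s \leq \frac{1}{\lVert E_t\rVert_X} \lVert a \rVert_{X'} \lVert E_t f^* \rVert_X \leq \lVert a \rVert_{X'} \lVert f^* \rVert_X = \lVert a \rVert_{X'} \lVert f \rVert_X,
\]
using Hölder's inequality (associate space), the bound $\lVert E_t f^*\rVert_X \leq \lVert E_t\rVert_X \lVert f^*\rVert_X$, and rearrangement-invariance. Taking the supremum over $t$ gives $\lVert S_a f \rVert_{m_{E(X)}} \leq \lVert a\rVert_{X'} \lVert f\rVert_X$, which is the claim.

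The step I expect to need the most care is the very first one — confirming $m_{E(X)}$ is a legitimate object, specifically pinning down which direction of the $\Delta_2$ inequality Definition \ref{D:small-m-norm} demands and checking that the submultiplicativity of dilation norms delivers exactly that; the boundedness estimate itself is then essentially one change of variables plus Hölder, so the only subtlety there is making sure the substitution $s \mapsto s/t$ inside $\int_0^\infty a(st)f^*(s)\,\diff s$ is applied correctly and that passing from $(S_a f)^*$ to $S_a(f^*)$ is justified — both of which are handled by Lemma \ref{L:Tf-Estimate-With-Asterix} and elementary manipulation. No monotonicity of any auxiliary ratio is needed here, so Proposition \ref{P-Main-Kfunc} does not enter; the argument is self-contained given the dilation-operator bounds recalled in Section \ref{CH:1}.
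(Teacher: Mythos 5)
Your boundedness estimate is exactly the paper's argument: reduce to $S_a(f^*)$ (the paper works with $f\in\measurablep$ and notes $S_af$ is non-increasing, which is the same reduction your Lemma~\ref{L:Tf-Estimate-With-Asterix} step achieves), change variables $u=st$ to write $S_af^*(t)=\tfrac{1}{t}\int_0^\infty a(u)E_tf^*(u)\,\diff u$, apply H\"older with $\lVert a\rVert_{X'}$ and $\lVert E_tf^*\rVert_X\leq\lVert E_t\rVert_X\lVert f\rVert_X$, and take the supremum. That part is correct and needs no changes.

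There is, however, a gap in your treatment of well-definedness of $m_{E(X)}$. In this paper the $\Delta_2$ condition required by Definition~\ref{D:small-m-norm} is, by the definition in Section~\ref{CH:1}, the conjunction of two properties: $\varphi$ must be \emph{non-decreasing} and must satisfy $\varphi(2t)\leq C\varphi(t)$. You verify only the doubling inequality (your submultiplicativity argument $\lVert E_t\rVert_X\leq\lVert E_{2t}\rVert_X\lVert E_{1/2}\rVert_X$ is fine for that, and even a bit slicker than the paper's, which deduces doubling from the monotonicity of $t\mapsto\lVert E_t\rVert_X$), but you never show that $E(X)(t)=t/\lVert E_t\rVert_X$ is non-decreasing, which is the part of the proof the paper actually spends effort on: it writes
\begin{equation*}
\lVert E_t\rVert_X=\sup_{\lVert f\rVert_X\leq1}\sup_{\lVert g\rVert_{X'}\leq1}t\int_0^\infty f^*(y)g^*(ty)\,\diff y,
\end{equation*}
so that $E(X)(t)=\bigl(\sup_{f,g}\int_0^\infty f^*(y)g^*(ty)\,\diff y\bigr)^{-1}$, which is non-decreasing because $g^*$ is non-increasing. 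Alternatively, you could close the gap with the standard dilation estimate $\lVert E_r\rVert_X\leq\max\{1,r\}$: for $s\leq t$, submultiplicativity gives $\lVert E_t\rVert_X\leq\lVert E_s\rVert_X\lVert E_{t/s}\rVert_X\leq\lVert E_s\rVert_X\,t/s$, i.e.\ $E(X)(s)\leq E(X)(t)$. Without the monotonicity of $E(X)$ your verification of the hypotheses of Definition~\ref{D:small-m-norm} (and hence the claim that $\varphinormqof{\cdot}$ taken with $\varphi=E(X)$ is a quasinorm) is incomplete, even though the statement itself is true and easily repaired as above.
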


\begin{proof}
The proof of the fact that $E(X)$ is quasi-concave is standard. 
It follows therefore that $E(X)$ satisfies 
the~$\Delta_2$~condition. 
Indeed since 
$t\mapsto \tfrac{E(X)(t)}{t}$ is non-increasing, one has
\begin{equation*}
    E(X)(2t) = \frac{E(X)(2t)}{2t}2t\leq \frac{E(X)(t)}{t}2t,
\end{equation*}
which is the~$\Delta_2$~condition. We have shown that $m_{E(X)}$ is well defined. Now assume that $a \in X'$ is non-increasing.
Let $f \in \measurablep$. We first note that since $a$ is non-increasing, so is
$S_a f$. Thus, using the change of variables $st = u$ and the H\"older inequality, we obtain
\begin{equation*}
\begin{split}
\lVert S_a f \rVert_{m_{E(X)}} =
&\smallmnormdefof{\EofXint{X}}{S_a f(t)} =
\sup_{t\in(0,\infty)} \frac{t}{\rVert E_t \rVert_{X}} \aoperator \\
=&\sup_{t\in(0,\infty)} \frac{t}{\rVert E_t \rVert_{X}} \frac{1}{t} \int_0^\infty a(u)E_t f (u) \diff u \\
\leq & \sup_{t\in(0,\infty)} \frac{1}{\rVert E_t \rVert_{X}}
\lVert E_t f \rVert_X \lVert a \rVert_{X'}
\leq \lVert a \rVert_{X'}  \lVert f \rVert_X.
\end{split}
\end{equation*}
\end{proof}

%

\begin{definition}
\label{D:R-infty}
Let $X$ be a rearrangement-invariant space such that $\varphi_{X'}$, the fundamental function of $X'$, is strictly increasing and unbounded. Let $\varphi \colon (0,\infty) \to (0, \infty)$ be quasi-concave, unbounded function with $\varphi(0_+) = 0$. Then we define the function $\psi$ with the formula
\begin{equation}
    \label{E:def-psi}
    \psi(t) =\varphi_{X'}^{-1}\left(\frac{1}{\varphi(t)}\right) \quad \text{for $t\in(0,\infty)$.}
\end{equation}
For $f \in \measurable$ and $t\in(0,\infty)$ we define the functions $\alpha(f)$, $\beta(f)$ with the formulas
\begin{equation}
    \label{E:def-alpha}
    \alpha(f)(t) = \int_0^{\psi(t)} f^*(s) \diff s,
\end{equation}
\begin{equation}
    \label{E:def-beta}
    \beta(f)(t) = \frac{1}{\varphi(t)}\lVert f^*\chi_{(\psi(t), \infty)} \rVert_X,
\end{equation}
and we set $R_\infty^X(f) = \alpha(f) + \beta(f)$.
\end{definition}

\begin{definition}\label{D:R-1}
Let $X$ be a rearrangement-invariant space such that $\varphi_X$ is strictly increasing and unbounded. Let $\varphi$ be a function such that if we define the function $\overline{\varphi}$
with the formula $\overline{\varphi}(u) = \tfrac{u}{\varphi(u)}$, then the function $\overline{\varphi}$ is quasi-concave and unbounded with $\overline{\varphi}(0+) = 0$. Then we define the function $\tilde{\psi}$ with the formula
\begin{equation*}
    \tilde{\psi}(t) = \varphi_X^{-1}\left(\frac{1}{\overline{\varphi}(t)}\right).
\end{equation*}
For $f \in \measurable$ and $t \in (0,\infty)$ we define the function $R_1^Xf(t)$ with the formula
\begin{equation*}
    R_1^X(f)(t) = \frac{1}{\varphi(t)} \lVert f^*\chi_{(0,\tilde{\psi}(t))} \rVert_X.
\end{equation*}
\end{definition}
In the following theorems we will show that, under some conditions, both of the operators $R_1^X$, $R_\infty^X$ define rearrangement-invariant spaces in a way we would expect from Remark \ref{R:Range-partner-using-R}. Note that both $R_1^X$ and $R_\infty^X$ are defined only by means of the space $X$ and the function $\varphi$. If we define $\varphi = E(X)$ then $R_1^X$ and $R_\infty^X$ are defined only through $X$. This means that the obtained range partners depend only on $X$ and the fixed domain space, but not directly on the kernel $a$. These spaces, of course, are candidates for the optimal range partner, but in the general setting of rearrangement-invariant spaces, it seems very difficult to prove that they, under reasonable conditions, indeed are optimal. We can, however, obtain optimality of said spaces in the special case of $a \in L^\infty\cap L^1$, see Theorem \ref{T:Optimal-space-a-integrable}, or when the domain space is a Lorentz space, with its exponents lying within some boundaries. This is further explored in Section \ref{CH:4}.

\begin{lemma}
\label{T:L-gives-norm}
Assume that $X$ and $\varphi$ are as in Definition \ref{D:R-infty}. Let $W$ be a  rearrangement-invariant space and set
\begin{equation*}
    \rho(f) = \lVert R_\infty^X f \rVert_W, \, f\in\measurablep.
\end{equation*}
If the condition
\begin{equation}
\label{ChiInSpaceCondition}
\min\left\{\frac{1}{\varphi},1\right\} \in W
\end{equation}
holds, then $\rho$ is a rearrangement-invariant Banach function norm.
\end{lemma}

\begin{proof}
To prove the triangle inequality, fix $f_1, f_2 \in \measurablep$ and $t \in (0,\infty)$.
From the definition of the associate norm we have
\begin{equation*}
    R_\infty^X (f_1+f_2)(t) =
    \int_0^{\psi(t)} (f_1 + f_2)^* +
    \frac{1}{\varphi(t)} \sup\left\{\int_{\psi(t)}^\infty (f_1+f_2)^*g^*, \,
    \lVert g \rVert_{X'}\leq 1\right\}.
\end{equation*}
Take arbitrary $g \in X'$ with $\lVert g \rVert_{X'} \leq 1$ and set
\begin{equation*}
    h(s) = \begin{cases} 1,                             &s\in(0,\psi(t)) \\
                         \frac{1}{\varphi(t)}g^*(s)     &s\in[\psi(t),\infty)
           \end{cases}.
\end{equation*}
We know that $X' \hra M(X')$ with the norm of the embedding equal to 1. Therefore we have
\begin{equation*}
    \sup_{s\in(0,\infty)}g^*(s)\varphi_{X'}(s) \leq
    \sup_{s\in(0,\infty)}g^{**}(s)\varphi_{X'}(s) \leq
    \lVert g \rVert_{X'} \leq
    1.
\end{equation*}
In particular for $s = \psi(t)$ we have
\begin{equation*}
    g^*(\psi(t)) \leq \frac{1}{\varphi_{X'}(\psi(t))} =
    \frac{1}{\varphi_{X'}\varphi_{X'}^{-1}(\frac{1}{\varphi(t)})} =
    \varphi(t),
\end{equation*}
thus $h$ is non-increasing. Now thanks to the subadditivity of $f \mapsto f^{**}$ we have
\begin{equation*}
    \int_0^u (f_1 + f_2)^* \leq \int_0^u f_1^* + f_2^*, \quad u \in (0,\infty).
\end{equation*}
Using Hardy's Lemma \eqref{Hardy-Lemma} we obtain
\begin{equation} \label{E:Hardy-Consequence}
    \int_0^\infty (f_1 + f_2)^*h \leq \int_0^\infty f_1^*h + f_2^*h.
\end{equation}
From the definition of $h$ it is clear that
\begin{equation*}
    \int_0^{\psi(t)} (f_1 + f_2)^* + \frac{1}{\varphi(t)} \int_{\psi(t)}^\infty (f_1+f_2)^*g^* =
    \int_0^\infty (f_1 + f_2)^*h,
\end{equation*}
which, in combination with \eqref{E:Hardy-Consequence}, gives
\begin{equation*}
    \begin{split}
    &\int_0^{\psi(t)} (f_1 + f_2)^* + \frac{1}{\varphi(t)} \int_{\psi(t)}^\infty (f_1+f_2)^*g^*\leq \\
    &\int_0^{\psi(t)} f_1^* + \frac{1}{\varphi(t)} \int_{\psi(t)}^\infty f_1^*g^* +
    \int_0^{\psi(t)} f_2^* + \frac{1}{\varphi(t)} \int_{\psi(t)}^\infty f_2^*g^*.
    \end{split}
\end{equation*}
Since this holds for all $g \in X'$ with $\lVert g \rVert_{X'} \leq 1$, we have
\[
R_\infty^X (f_1+f_2)(t) \leq R_\infty^X (f_1)(t) + R_\infty^X (f_2)(t) \quad \text{for $t \in (0,\infty)$},
\]
which, using the (P2) property of the norm in $W$, gives the triangle inequality.

The fact that $\rho(f) = 0 \iff f=0$ holds trivially. Positive homogeneity is trivial. We've shown that (P1) holds. Next, (P6) holds obviously and (P2) and (P3) are direct consequences of the corresponding properties of $f \mapsto f^*$ and of $\lVert \cdot \rVert_X$ and $\lVert \cdot \rVert_{W}$.

To show (P4) we only need to show that $\rho(\chi_{(0,u)}) < \infty$ for any $u\in (0,\infty)$ because $\rho$ is defined in terms of the non-increasing rearrangement and we know, that for a measurable set $E \subset (0,\infty)$ it holds that $(\chi_E)^* = \chi_{(0,|E|)}$. Fix $u \in (0,\infty)$,
by the definition of $\alpha$ and $\beta$ we have for $t \in (0,\infty)$
\begin{equation*}
\alpha(\chi_{(0,u)})(t) = \min\{\psi(t), u\} \leq u\min\{\psi(t),1\}
\end{equation*}
and
\begin{equation*}
\begin{split}
\beta(\chi_{(0,u)})(t) = \frac{1}{\varphi(t)}
\lVert \chi_{(0,u) \cap (\psi(t),\infty)} \rVert_X =
\frac{1}{\varphi(t)} \varphi_X(\max\{u-\psi(t), 0\})
\leq \frac{1}{\varphi(t)} \varphi_X(u),
\end{split}
\end{equation*}
where $\varphi_X$ denotes the fundamental function of X. Furthermore, since $\lim_{t \rightarrow 0_+} \psi(t) = \infty$, there exists $\epsilon > 0$ such that
$\beta(\chi_{(0,u)})(t) = 0$, for $t<\epsilon$, and $\frac{1}{\varphi(\epsilon)}\geq 1$. Since $\frac{1}{\varphi} \leq
\frac{1}{\varphi(\epsilon)}$ on $(\epsilon,\infty)$ we have
\begin{equation*}
\beta(\chi_{(0,u)}) \leq \varphi_X(u) \min\left\{\frac{1}{\varphi},\frac{1}{\varphi(\epsilon)}\right\} \leq
\varphi_X(u) \frac{1}{\varphi(\epsilon)} \min\left\{\frac{1}{\varphi},1\right\}.
\end{equation*}
Thus, according to (\ref{ChiInSpaceCondition}), $\beta(\chi_{(0,u)}) \in W$. To show that $\alpha(\chi_{(0,u)}) \in W$ we need to only show that there exists a constant $C$ such that $\min\{\psi,1\} \leq C\min\{\frac{1}{\varphi},1\}$, for which it is sufficient to show that there exists $t>0$ and $C>0$ such that for all $s>t$
\[
\frac{1}{\varphi(s)} C\geq \psi(s) = \varphi_{X'}^{-1}(\frac{1}{\varphi(s)}),
\]
which is equivalent to
$\varphi_{X'}(\tau C) \geq \tau$, for $ \tau  \leq \frac{1}{\varphi(t)}$, which follows from quasi-concavity of $\varphi_{X'}$. Indeed, each quasi-concave function dominates the function $\min\{1,\tau\}$, $\tau \in (0,\infty)$, up to a multiplicative constant, and so we have
\[
\varphi_{X'}(C\tau) \geq C_1 C \tau,
\]
for all $C$ and $\tau$ such that $C\tau \leq 1$ and for some $C_1$. If we set $C = \frac{1}{C_1}$, then we have
\[
\varphi_{X'}(C\tau) \geq \tau \quad \text{for $\tau \leq \frac{1}{C}$}.
\]
We only need to find $t$ such that $\frac{1}{\varphi(t)} \leq \frac{1}{C}$, which we can do since $\varphi$ is unbounded. Now we have $t$ and $C$ as we wanted, therefore we have just proven that $\min\{1,\psi\}$ is dominated by $\min\{1,\frac{1}{\varphi}\}$ up to a constant.
Now since both $\beta(\chi_{(0,u)}) \in W$ and $\alpha(\chi_{(0,u)}) \in W$, it obviously holds that $\rho(\chi_{(0,u)})<\infty$ and thus (P4) holds.

It remains to show (P5). Let $E \subset (0,\infty)$ be of finite measure, let $f \in \measurablep$ and choose $u\in\psi^{-1}(|E|)$. Then, since $\psi$ is non-increasing, one has
\begin{equation*}
\begin{split}
\rho(f) \geq
&\lVert \alpha(f) \rVert_W \\ \geq
&\lVert \chi_{(0,u)}(t) \int_0^{\psi(t)} f^*(s) \diff s \rVert_{W} \\ \geq
&\lVert \chi_{(0,u)} \rVert_{W} \int_0^{|E|} f^*(s) \diff s \\ \geq
&\lVert \chi_{(0,u)} \rVert_{W} \int_E f(s) \diff s,
\end{split}
\end{equation*}
which establishes (P5).
\end{proof}
\begin{theorem}
\label{MainTheorem}
Assume that $X$ and $\varphi$ are as in Definition \ref{D:R-infty}. Let $Y \subset X + L^1$ be a rearrangement-invariant space. 
Let $a$ be a~non-increasing non-negative function on $(0,\infty)$ such that
\begin{equation}
\label{square}
\begin{split}
&S_a \colon X \rightarrow m_\varphi, \\
&S_a \colon L^1 \rightarrow L^\infty.
\end{split}
\end{equation}
Assume that \eqref{ChiInSpaceCondition} holds for $W=Y'$ and let $Z$ be the rearrangement-invariant space satisfying 
\begin{equation*}
    \lVert f \rVert_{Z'}=\lVert R_\infty^X f \rVert_{Y'}, \, f\in \measurable.
\end{equation*}
Then
\begin{equation*}
S_a\colon Y \rightarrow Z.
\end{equation*}
\end{theorem}

\begin{proof}
The fact that such $Z$ is necessarily a rearrangement-invariant space was proved in the preceding lemma, so we only need to show $S_a\colon Y \rightarrow Z$.
To this end, we will need to calculate the $K$-functionals of spaces $(X,L^1)$ and spaces $(m_\varphi,L^\infty)$. By Theorem~\ref{P-Linf-Smallm-K-functional} we have
\begin{equation*}
K(f,t;\quasilorentz,L^\infty)  \approx \varphinormqof{\chi_{(0,\varphi^{-1}(t))}f^*},
\end{equation*}
and, by \cite[Theorem 5.1]{Milman}, we have
\begin{equation*}
K(f,t,L^1,X) \leq C \int_0^{\varphi_{X{'}}^{-1}(t)} f^*(s) \diff s +
t \lVert f^* \chi_{(\varphi_{X{'}}^{-1}(t), \infty)} \rVert_X,
\end{equation*}
for some constant $C>0$.
Fix arbitrary $f \in L^1 + L^\infty$ and $t \in (0,\infty)$. By (\ref{square}) and the definition of the $K$-functional, we have
\begin{equation*}
K(S_a f,t,L^\infty,\quasilorentz) \leq C K(f,t,L^1,X)
\end{equation*}
for some constant $C>0$.
Combining that with the well known equality
\begin{equation*}
\frac{1}{t} K(f,t,L^\infty,\quasilorentz) = K(f,\frac{1}{t}, \quasilorentz, L^\infty),
\end{equation*}
we obtain
\begin{equation*}
\begin{split}
&\sup_{0<u<\varphi^{-1}(\frac{1}{t})} (Tf)^*(u)\varphi(u) \leq \\
&\leq \frac{C}{t} \left( \int_0^{\varphi_{X{'}}^{-1}(t)} f^*(s) \diff s +
t \lVert f^* \chi_{(\varphi_{X{'}}^{-1}(t), \infty)} \rVert_X \right).
\end{split}
\end{equation*}
In particular, since $\varphi$ is quasiconcave and therefore continuous and $(S_a f)^*$ is non-increasing, we can take $u = \varphi^{-1}(\frac{1}{t})$ and obtain
\begin{equation*}
\begin{split}
&\frac{1}{t}(S_a f)^*(\varphi^{-1}(\frac{1}{t})) \leq \\
&\leq \frac{C}{t} \left( \int_0^{\varphi_{X{'}}^{-1}(t)} f^*(s) \diff s +
t \lVert f^* \chi_{(\varphi_{X{'}}^{-1}(t), \infty)} \rVert_X \right).
\end{split}
\end{equation*}
Now since $\varphi$ is a~one-to-one mapping on $(0,\infty)$ and the above holds for every
$t \in (0,\infty)$, substituting $\varphi^{-1}(\frac{1}{t}) = u$ we obtain, for all
$u \in (0,\infty)$,
\begin{equation*}
\varphi(u)(S_a f)^*(u)
\leq C \varphi(u) \int_0^{\psi(u)} f^*(s) \diff s +
C\lVert f^* \chi_{(\psi(u),\infty)} \rVert_X.
\end{equation*}
Dividing by $\varphi(u)$ and changing the variable $u$ to $t$ yields the following result. There exists $C>0$ such that for all $f\in X+L^1$ and $t>0$
\begin{equation*}
(S_a f)^*(t) \leq CR_\infty^X f(t),
\end{equation*}
which is the property (i) in Remark \ref{R:Range-partner-using-R}, therefore $S_a\colon Y\rightarrow Z$.
\end{proof}

In conjunction with Theorem \ref{T:boundedness-of-Sa-general} we can now formulate a corollary of the preceding theorem, which makes the result more manageable.

\begin{corollary}\label{MainCorollary}
Let $X$ be such that $\varphi = E(X)$ is a quasi-concave unbounded strictly increasing function with $\varphi(0_+) = 0$. Let $Y \subset X + L^1$ be a~rearrangement-invariant space and let $a\in X' \cap L^\infty$ be non-increasing. Assume that \eqref{ChiInSpaceCondition} holds for $W=Y'$ and let $Z$ be the rearrangement-invariant space satisfying 
\begin{equation*}
    \lVert f \rVert_{Z'}=\lVert R_\infty^X f \rVert_{Y'}, \, f\in \measurable.
\end{equation*}
Then
$S_a\colon Y \to Z$.
\end{corollary}

\begin{proof}
Since $a \in L^\infty = (L^1)'$, $E(L^1)\equiv 1$ and $m_1 = L^\infty$, we have $S_a\colon L^1 \to L^\infty$, and since $a \in X'$ and $E(X) = \varphi$, we have $S_a\colon X \to m_\varphi$ thanks to Theorem \ref{T:boundedness-of-Sa-general}. Now we can apply Theorem \ref{MainTheorem}.
\end{proof}

\begin{lemma}
Assume that $X$ and $\varphi$ are as in Definition \ref{D:R-1}. Let $W$ be a rearrangement-invariant space and set
\[
\rho(f) = \lVert R_1^X(f) \rVert_W, \, f\in \measurable.
\]
If the condition
\begin{equation}
\label{Norm-Condition-L1}
    \min\left\{\frac{1}{\varphi(t)},\frac{1}{t}\right\} \in W
\end{equation}
holds, then $\rho$ is a rearrangement-invariant Banach function norm.
\end{lemma}

\begin{proof}
To show that (P1) holds, we need only to show the triangle inequality, since the other two assertions clearly hold. To that end take $f_1, f_2 \in \measurable$ and $t \in (0,\infty)$. We have
\begin{equation*}
    \begin{split}
        R_1^X(f_1+f_2)(t) &= \tfrac{1}{\varphi(t)} \lVert (f_1+f_2)^*\chi_{(0,\tilde{\psi}(t))} \rVert_X \\
        &= \tfrac{1}{\varphi(t)} \sup_{\lVert g \rVert_{X'} \leq 1} \int_0^{\tilde{\psi}(t)} (f_1+f_2)^*g^* \\
        &\leq \tfrac{1}{\varphi(t)} \sup_{\lVert g \rVert_{X'} \leq 1} \int_0^{\tilde{\psi}(t)} f_1^*g^* + \int_0^{\tilde{\psi}(t)} f_2^*g^* \\
        &= R_1^X(f_1)(t) + R_1^X(f_2)(t),
    \end{split}
\end{equation*}
where the only inequality is a~consequence of the Hardy Lemma \eqref{Hardy-Lemma}. Now the (P2) property of the norm in $W$ gives the triangle inequality. Properties (P2) and (P3) obviously hold for $\rho$. To show (P4) take $u \in (0,\infty)$. We have for all $t>0$
\begin{equation*}
\begin{split}
    \tfrac{1}{\varphi(t)}\lVert \chi_{(0,u)}\chi_{(0,\tilde{\psi}(t))} \rVert_X &=
    \tfrac{1}{\varphi(t)}\varphi_X(\min\{u,\tilde{\psi}(t)\}) \\
    &=\tfrac{1}{\varphi(t)} \min\{\varphi_X(u),\varphi_X(\tilde{\psi}(t)) \\
    &=\tfrac{1}{\varphi(t)} \min\{\varphi_X(u),\tfrac{1}{\overline{\varphi}(t)}\},
\end{split}
\end{equation*}
where we have used the fact that $\varphi_X$ is non-decreasing and the definition of $\tilde{\psi}$. If we move $\tfrac{1}{\varphi(t)}$ inside the argument of the minimum, we get from the definition of $\overline{\varphi}$ that
\begin{equation*}
    \tfrac{1}{\varphi(t)} \min\{\varphi_X(u),\tfrac{1}{\overline{\varphi}(t)}\} =
    \min\{\tfrac{\varphi_X(u)}{\varphi(t)},\tfrac{1}{t}\} \leq
    C \min\{\tfrac{1}{\varphi(t)},\tfrac{1}{t}\}
\end{equation*}
for some $C>0$ depending on $u$ but not on $t$. Now \eqref{Norm-Condition-L1} gives (P4) property of $\rho$. It remains to show (P5). To that end take $E\subset (0,\infty)$ with positive measure, $f\in\measurablep$ and choose $u\in\tilde{\psi}^{-1}(|E|)$. Since $\tilde{\psi}$ is non-increasing, we have
\begin{equation*}
    \begin{split}
        \rho(f)
        &= \lVert \tfrac{1}{\varphi(t)}\lVert f^*\chi_{(0,\tilde{\psi}(t)})\rVert_X\rVert_W \\
        &\geq \lVert \chi_{(0,u)}(t) \tfrac{1}{\varphi(t)}\lVert f^*\chi_{(0,\tilde{\psi}(t)})\rVert_X\rVert_W \\
        &\geq \lVert \chi_{(0,u)} \rVert_W \tfrac{1}{\varphi(u)}\lVert f^*\chi_{(0,|E|)} \rVert_X \\
        &\geq C \lVert \chi_{(0,u)} \rVert_W \tfrac{1}{\varphi(u)} \int_0^{|E|}f^*\chi_{(0,|E|)} \\
        &\geq C \lVert \chi_{(0,u)} \rVert_W \tfrac{1}{\varphi(u)} \int_E f
    \end{split}
\end{equation*}
for some $C>0$ only depending on $E$ and the space $X$, where the second to last inequality is the (P5) property of the norm in $X$. Since $u$ does not depend on $f$, we have obtained the (P5) property of $\rho$.
\end{proof}

\begin{theorem}
Assume that $X$ and $\varphi$ are as in Definition \ref{D:R-1}. Let $Y \subset X  + L^\infty$ be a rearrangement-invariant space, assume that $\overline{\varphi}$ is strictly increasing.
Let $a$ be a non-increasing non-negative function on $(0,\infty)$ such that
\begin{equation}
\begin{split}
\label{square2}
        &S_a\colon X \rightarrow m_\varphi, \\
        &S_a\colon L^\infty \rightarrow L^{1,\infty}.
\end{split}
\end{equation}
Assume \eqref{Norm-Condition-L1} holds for $W = Y'$ and let $Z$ be the rearrangement-invariant space for which
\[
\lVert f \rVert_{Z'} = \lVert R_1^X f \rVert_{Y'}, \, f \in \measurable.
\]
Then
\[
S_a\colon Y \rightarrow Z.
\]
\end{theorem}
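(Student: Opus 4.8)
The plan is to follow the proof of Theorem~\ref{MainTheorem} almost line for line, with the pair $(m_\varphi,L^\infty)$ there replaced by $(m_\varphi,L^{1,\infty})$ — recalling that $L^{1,\infty}=m_{\mathrm{id}}$, the Marcinkiewicz space built on the weight $\mathrm{id}(t)=t$ — and the pair $(L^1,X)$ replaced by $(L^\infty,X)$. That $\rho$ is a rearrangement-invariant norm, granted the condition \eqref{Norm-Condition-L1} for $W=Y'$, is exactly the content of the preceding theorem; so the entire task is to establish $S_a\colon Y\to Z$.

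The heart of the matter is a Calder\'on-type pointwise estimate for $S_a$ on $X+L^\infty$. From the two boundedness assumptions \eqref{square2} and the elementary interpolation inequality for the $K$-functional one has, for every $f\in X+L^\infty$ and $t>0$,
\[
K(S_af,t;L^{1,\infty},m_\varphi)\le C\,K(f,t;L^\infty,X).
\]
For the right-hand side, the optimal truncation decomposition (cut $f$ at level $f^*(\tau)$) gives $K(f,s;L^\infty,X)\le f^*(\tau)+s\lVert f^*\chi_{(0,\tau)}\rVert_X$ for every $\tau>0$; bounding $f^*(\tau)\le \lVert f^*\chi_{(0,\tau)}\rVert_X/\varphi_X(\tau)$ and taking $\tau=\tilde\psi(t)=\varphi_X^{-1}(1/\overline\varphi(t))$ together with the specific value $s=\overline\varphi(t)$ yields $K(f,\overline\varphi(t);L^\infty,X)\le 2\,\overline\varphi(t)\,\lVert f^*\chi_{(0,\tilde\psi(t))}\rVert_X=2\,\overline\varphi(t)\,\varphi(t)\,R_1f(t)=2t\,R_1f(t)$. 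For the left-hand side I would apply Proposition~\ref{P-Main-Kfunc} to the pair $(m_{\mathrm{id}},m_\varphi)$, whose weights $\mathrm{id}$ and $\varphi$ satisfy $\Delta_2$ and whose governing quotient $\mathrm{id}/\varphi=\overline\varphi$ is monotone by the hypothesis that $\overline\varphi$ is strictly increasing; keeping only the $m_\varphi$-term of its lower bound and, after the substitution $t=\overline\varphi(w)$ (so that the set $A$ becomes $(0,w)$ and $B=[w,\infty)$), evaluating the remaining supremum at the left endpoint $w$ of $B$ gives $K(S_af,\overline\varphi(w);L^{1,\infty},m_\varphi)\ge C_1\,\overline\varphi(w)\,\varphi(w)\,(S_af)^*(2w)=C_1\,w\,(S_af)^*(2w)$. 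Chaining the three displays and cancelling $w$ produces the key inequality
\[
(S_af)^*(2w)\le C'\,R_1f(w)\qquad\text{for every }f\in X+L^\infty,\ w>0.
\]

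The one genuine obstacle — the only real departure from Theorem~\ref{MainTheorem} — is the dilation by $2$ on the left: unlike the $L^\infty$-endpoint situation exploited there, where the sharp bound $f^*(t)\le g^*(t)+\lVert h\rVert_\infty$ eliminates any dilation, Proposition~\ref{P-Main-Kfunc} must lose a factor $E_{1/2}$ because the non-increasing rearrangement is only weakly subadditive. Consequently the estimate above is not literally property~(i) of Theorem~\ref{T:Range-partner-using-R}, and I would finish by absorbing the dilation with the help of the boundedness of $E_2$ on the rearrangement-invariant space $Y'$. First, feeding $\chi_{(0,1)}$ into the key inequality gives $a^{**}(2w)\le C'\,R_1\chi_{(0,1)}(w)$ (since $S_a\chi_{(0,1)}=a^{**}$ is non-increasing); as $R_1\chi_{(0,1)}\in Y'$ by the (P4) property proved in the preceding theorem, it follows that $E_{1/2}a^{**}\in Y'$ and hence $a^{**}=E_2(E_{1/2}a^{**})\in Y'$. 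Thus Theorem~\ref{T:Range-partner-condition} applies: the functional $\sigma(f)=\lVert S_af^*\rVert_{Y'}$ is a rearrangement-invariant norm and $S_a\colon Y\to W$, where $W=X(\sigma')$. Second, applying the key inequality to $f^*$ (for which $S_af^*$ is already non-increasing and $R_1f^*=R_1f$) one gets $E_{1/2}(S_af^*)\le C'\,R_1f^*$ pointwise, so, using $E_2E_{1/2}=\mathrm{id}$,
\[
\sigma(f)=\lVert S_af^*\rVert_{Y'}\le\lVert E_2\rVert_{Y'}\lVert E_{1/2}(S_af^*)\rVert_{Y'}\le C'\lVert E_2\rVert_{Y'}\lVert R_1f^*\rVert_{Y'}=C'\lVert E_2\rVert_{Y'}\,\rho(f).
\]
Hence $\sigma\le C'\lVert E_2\rVert_{Y'}\,\rho$; passing to associate norms reverses the inequality, $\rho'\le C'\lVert E_2\rVert_{Y'}\,\sigma'$, which is precisely $W\hookrightarrow Z$. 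Therefore $S_a\colon Y\to W\hookrightarrow Z$, as claimed.
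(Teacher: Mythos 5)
Your proposal is correct and follows essentially the same route as the paper: the interpolation inequality $K(S_af,t;L^{1,\infty},m_\varphi)\le CK(f,t;L^\infty,X)$ from \eqref{square2}, the endpoint $K$-functional estimates (your direct truncation argument for $(L^\infty,X)$ replaces the cited modification of Milman, and your use of Proposition~\ref{P-Main-Kfunc} for $(m_{\mathrm{id}},m_\varphi)$ is exactly the paper's ``easy modification''), the substitution $t=\overline\varphi(w)$ yielding $(S_af)^*(2w)\le C'R_1f(w)$, and the conclusion via Theorem~\ref{T:Range-partner-using-R} together with boundedness of the dilation operator. Your explicit absorption of the factor $E_{1/2}$ (via $a^{**}\in Y'$, Theorem~\ref{T:Range-partner-condition}, and $\sigma\le C'\lVert E_2\rVert_{Y'}\rho$) is simply a careful spelling-out of the paper's final one-sentence appeal to dilation boundedness.
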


\begin{proof}
Easy modifications of Theorem \ref{P-Main-Kfunc} and \cite[Theorem~4.1]{Milman} give respectively
\begin{equation} \label{E:KIneq1}
\sup_{u < (\overline{\varphi})^{-1}(t)} (S_a f)^*(2u)u + t\sup_{u \geq (\overline{\varphi})^{-1}(t)} (S_a f)^*(2u)\varphi(u) \leq
    C_1 K(S_a f, t, L^{1,\infty}, m_\varphi)
\end{equation}
and
\begin{equation} \label{E:KIneq2}
    K(f, t, L^\infty, X) \leq C_2(f^*(\varphi_X^{-1}(\tfrac{1}{t}))
    + t\lVert f^*\chi_{(0,\varphi_X^{-1}(\tfrac{1}{t}))} \rVert_X),
\end{equation}
for all $t \in (0,\infty)$, $f \in \measurable$ and some constants $C_1,C_2>0$.
Positive homogeneity and the definition of the fundamental function immediately gives
\[
\lVert f^*\chi_{(0,\varphi_X^{-1}(\tfrac{1}{t}))} \rVert_X \geq
f^*(\varphi_X^{-1}(\tfrac{1}{t})) \varphi_X(\varphi_X^{-1}(\tfrac{1}{t})),
\]
therefore we obtain
\begin{equation*}
     K(f, t, L^\infty, X) \leq (C_2+1)(t\lVert f^*\chi_{(0,\varphi_X^{-1}(\tfrac{1}{t}))} \rVert_X).
\end{equation*}
From the definition of the $K$-functional and \eqref{square2}, we obtain
\begin{equation*}
    K(S_a f, t, L^{1,\infty}, m_\varphi) \leq C K(f, t, L^\infty, X),
\end{equation*}
for some constant $C>0$ and all $f \in \measurable$ and $t \in (0,\infty)$.
Using \eqref{E:KIneq1} and \eqref{E:KIneq2} we have a possibly different constant $C>0$ such that, for all $f \in \measurable$ and $t\in(0,\infty)$,
\begin{equation*}
    \sup_{u < (\overline{\varphi})^{-1}(t)} (S_a f)^*(2u)u + t\sup_{u \geq (\overline{\varphi})^{-1}(t)} (S_a f)^*(2u)\varphi(u)
    \leq C t\lVert f^*\chi_{(0,\varphi_X^{-1}(\tfrac{1}{t}))} \rVert_X.
\end{equation*}
In particular, we can clearly replace the second supremum with the value of its argument in $(\overline{\varphi})^{-1}(t)$. Furthermore, we can do the same in the first supremum since $S_a(f^*)$ is non-increasing and $u\mapsto 2u$ is continuous. This results in the following inequality
\begin{equation*}
    (S_a f)^*(2(\overline{\varphi})^{-1}(t))(\overline{\varphi})^{-1}(t) +
    t(S_a f)^*(2(\overline{\varphi})^{-1}(t))\varphi((\overline{\varphi})^{-1}(t)) \leq
    C t\lVert f^*\chi_{(0,\varphi_X^{-1}(\tfrac{1}{t}))} \rVert_X
\end{equation*}
for all $f \in \measurable$ and $t \in (0,\infty)$.
Now we can substitute $t = \overline{\varphi}(s)$, since $\overline{\varphi}$ is a one to one mapping of $(0,\infty)$, which results in
\begin{equation*}
    (S_a f)^*(2s)s + \overline{\varphi}(s)(S_a f)^*(2s)\varphi(s)
    \leq C\overline{\varphi}(s) \lVert f^*\chi_{(0,\tilde{\psi}(s))} \rVert_X
\end{equation*}
for all $f \in \measurable$ and $s \in (0,\infty)$.
A simple calculation now gives
\begin{equation*}
    (S_a f)^*(2s) \leq C\tfrac{1}{\varphi(s)}\lVert f^*\chi_{(0,\tilde{\psi}(s))} \rVert_X
\end{equation*}
for all $f \in \measurable$ and $s \in (0,\infty)$,
which implies the following statement. There exists $C>0$ such that for all $f \in X+L^\infty$ and $t>0$
\begin{equation*}
(S_a f)^*(t) \leq CR_1^X f(\tfrac{1}{2}t).
\end{equation*}
Now we need only to use Remark \ref{R:Range-partner-using-R} the fact that the dilation operator is bounded on every rearrangement-invariant space.

\end{proof}
The following corollary is analogous to Corollary \ref{MainCorollary}.
\begin{corollary}\label{MainCorollary2}
Let $X$ be such that if $\varphi = E(X)$, then $\overline{\varphi}$ is a quasi-concave unbounded strictly increasing function with $\overline{\varphi}(0_+) = 0$. Let $Y \subset X + L^\infty$ be a rearrangement-invariant space and let $a \in X' \cap L^1$ be non-increasing and non-negative. Assume \eqref{Norm-Condition-L1} holds for $W = Y'$ and let $Z$ be the rearrangement-invariant space for which
\[
\lVert f \rVert_{Z'} = \lVert R_1^X f \rVert_{Y'}, \, f \in \measurable.
\]
Then $S_a\colon Y \rightarrow Z$.

\end{corollary}
Now we explore the properties of $S_a$ when the kernel $a$ is both integrable and bounded. This is indeed the strongest possible condition, at least in the context of rearrangement-invariant spaces, as $L^1\cap L^\infty$ is the smallest rearrangement-invariant space. The chosen approach uses the operator $R_\infty^X$ and the space $X=L^\infty$. Working with $R_1^X$ and setting $X=L^1$, unincidentally, yields the same result.
\begin{lemma}
\label{L:Reversed-Inequality}
Let $a: (0,\infty) \rightarrow (0,\infty)$ be non-increasing and non-zero on at least some set of non-zero measure. Then there is a constant $C$ such that, for all $f \in \measurable$,
\begin{equation*}
S_a (f^*) (t) \geq C \int_0^{\frac{1}{t}} f^*(s) \diff s\quad\text{for all $t\in(0,\infty)$.}
\end{equation*}
\end{lemma}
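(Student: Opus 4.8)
The plan is to bound $S_a(f^*)(t)=\int_0^\infty a(st)\,f^*(s)\,\diff s$ from below by discarding the tail of the integral over which the kernel is small, exploiting that $a$ is bounded below near the origin, and then to repair the resulting range of integration by using that the average of a non‑increasing function is itself non‑increasing. First I would invoke the hypotheses on $a$: since $a$ is non‑increasing and does not vanish outside a null set, there exist $u>0$ and $A>0$ with $a(s)\geq A$ for all $s\in(0,u)$ — take any $u$ with $a(u)>0$ and put $A=a(u)$; if $a$ is strictly positive everywhere, as the stated codomain suggests, one may simply take $u=1$. For fixed $t>0$ and $s\in(0,u/t)$ we have $st\in(0,u)$, hence $a(st)\geq A$, and dropping the contribution of $(u/t,\infty)$ gives
\[
S_a(f^*)(t)\;\geq\;\int_0^{u/t}a(st)\,f^*(s)\,\diff s\;\geq\;A\int_0^{u/t}f^*(s)\,\diff s .
\]

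Second, I would compare $\int_0^{u/t}f^*$ with $\int_0^{1/t}f^*$. If $u\geq 1$ this is immediate since $f^*\geq 0$ and $u/t\geq 1/t$. If $u<1$, I would use that $r\mapsto \frac1r\int_0^r f^*(s)\,\diff s=f^{**}(r)$ is non‑increasing on $(0,\infty)$ — equivalently the elementary inequality $\int_0^\beta g\leq\frac{\beta}{\alpha}\int_0^\alpha g$ for non‑increasing $g\geq 0$ and $0<\alpha\leq\beta$, which follows from $\int_\alpha^\beta g\leq(\beta-\alpha)g(\alpha)\leq\frac{\beta-\alpha}{\alpha}\int_0^\alpha g$ — applied with $\alpha=u/t$ and $\beta=1/t$, yielding $\int_0^{u/t}f^*\geq u\int_0^{1/t}f^*$. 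Combining the two steps, $S_a(f^*)(t)\geq A\min\{1,u\}\int_0^{1/t}f^*(s)\,\diff s$ for every $t>0$ and every $f\in\measurable$, so the lemma holds with $C=A\min\{1,u\}$.

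There is no genuine obstacle here; the only point deserving care is that $C$ must be independent of $t$, and this is automatic because the ratio of the two integration endpoints $u/t$ and $1/t$ does not depend on $t$, so the scaling in the second step cancels. This pointwise estimate is precisely the reverse of the bound behind Lemma~\ref{L:Tf-Estimate-With-Asterix}, and is the key input — via the choice $X=L^\infty$ together with the operator $R_\infty$ — for establishing optimality of the range partner in the case $a\in L^1\cap L^\infty$.
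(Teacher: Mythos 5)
Your argument is correct and is essentially the paper's own proof: both restrict the integral to $(0,u/t)$ where $a$ is bounded below by a positive constant, and then use that $f^{**}$ is non-increasing to pass from $\int_0^{u/t}f^*$ to $u\int_0^{1/t}f^*$ (the paper simply chooses $u\in(0,1)$ from the start, avoiding your case distinction). Nothing further is needed.
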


\begin{proof}
Since $a$ is non-increasing and not zero on at least some set of non-zero measure, there exists $u\in(0,1)$ such that $\inf_{s<u} a(s) = C_1 >0$.
Now if $f \in \measurable$ and $t\in(0,\infty)$, then
\begin{equation*}
\begin{split}
S_a (f^*)(t) & = \int_0^\infty a(st)f^*(s) \diff s
\geq \int_0^{\frac{u}{t}} a(st)f^*(s) \diff s \\
& \geq C_1 \int_0^{\frac{u}{t}} f^*(s) \diff s
\geq C_1u \int_0^{\frac{1}{t}} f^*(s) \diff s,
\end{split}
\end{equation*}
where the last inequality follows from the fact that $f^{**}$ is non-increasing.
\end{proof}

\begin{lemma}
\label{L:Equiv-L1intLinfty}
Let $X=L^\infty$ and $\varphi = E(L^\infty)$, that is $\varphi(t) = t$, $t \in (0,\infty)$. Then we have, for all $f \in L^1 + L^\infty$ and $t>0$,
\begin{equation*}
\int_0^{\frac{1}{t}} f^* \leq
R_\infty^X f(t) \leq
2 \int_0^{\frac{1}{t}} f^*.
\end{equation*}
\end{lemma}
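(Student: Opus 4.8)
The plan is simply to unwind Definition~\ref{D:R-infty} in the concrete case $X = L^\infty$. First I would record that the associate space of $L^\infty$ is $L^1$, whose fundamental function is $\varphi_{L^1}(t) = \lVert\chi_{(0,t)}\rVert_{L^1} = t$; thus $\varphi_{X'}(t) = t$ and $\varphi_{X'}^{-1}(t) = t$. Since by hypothesis $\varphi(t) = t$, formula~\eqref{E:def-psi} then gives $\psi(t) = \varphi_{X'}^{-1}\bigl(1/\varphi(t)\bigr) = 1/t$. Plugging this into~\eqref{E:def-alpha} and~\eqref{E:def-beta}, one obtains
\[
\alpha(f)(t) = \int_0^{1/t} f^*(s)\,\diff s, \qquad
\beta(f)(t) = \frac{1}{t}\bigl\lVert f^*\chi_{(1/t,\infty)} \bigr\rVert_{L^\infty},
\]
so that $R_\infty f(t) = \int_0^{1/t} f^* + \tfrac1t \lVert f^*\chi_{(1/t,\infty)}\rVert_{L^\infty}$ for every $t>0$.

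Next, the left-hand inequality is immediate: since $\beta(f) \ge 0$ we have $R_\infty f(t) \ge \alpha(f)(t) = \int_0^{1/t} f^*$. For the right-hand inequality I would use only monotonicity of the non-increasing rearrangement. As $f^*$ is non-increasing, $\lVert f^*\chi_{(1/t,\infty)}\rVert_{L^\infty} = \esssup_{s>1/t} f^*(s) \le f^*(1/t)$, and moreover $f^*(s) \ge f^*(1/t)$ for all $s \in (0,1/t)$, whence
\[
\frac{1}{t}\bigl\lVert f^*\chi_{(1/t,\infty)}\bigr\rVert_{L^\infty}
\le \frac{1}{t} f^*(1/t) = \int_0^{1/t} f^*(1/t)\,\diff s
\le \int_0^{1/t} f^*(s)\,\diff s.
\]
Adding the two terms of $R_\infty f(t)$ yields $R_\infty f(t) \le 2\int_0^{1/t} f^*$, completing the argument.

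This computation is entirely routine, so there is no genuine obstacle; the only points deserving a line of justification are the evaluation of $\beta$---namely that the $L^\infty$ norm of $f^*\chi_{(1/t,\infty)}$ is at most $f^*(1/t)$, which is just monotonicity of $f^*$---and the verification that the standing hypotheses of Definition~\ref{D:R-infty} hold in this setting, i.e.\ that $\varphi_{X'}(t) = t$ is strictly increasing and unbounded and that $\varphi(t) = t$ is quasi-concave and unbounded with $\varphi(0_+) = 0$.
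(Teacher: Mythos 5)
Your proposal is correct and follows essentially the same route as the paper: identify $\varphi_{X'}(t)=t$, hence $\psi(t)=1/t$, get the lower bound from $\beta(f)\ge 0$, and get the upper bound by showing $\beta(f)(t)\le\alpha(f)(t)$ via monotonicity of $f^*$ (the paper phrases this as $\tfrac1t f^*(1/t)\le\tfrac1t f^{**}(1/t)=\alpha(f)(t)$, which is the same estimate you spell out). No gaps.
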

\begin{proof}
Take $f \in L^1 + L^\infty$ and $t>0$.
First, we shall observe that $\varphi_{X'}(t) = t$, since
$X' = L^1$, therefore $\psi(t) = \frac{1}{t}$. This means that
\[
\alpha(f)(t) = \int_0^{\frac{1}{t}} f^*
\]
and, since $\beta(f) \geq 0$, we can easily obtain the first inequality from the definition of $R$. Now, a~simple calculation shows that
\[
\beta(f)(t) = \frac{1}{t} \lVert f^*\chi_{(\frac{1}{t},\infty)} \rVert_\infty =
\frac{1}{t}f^*\left(\frac{1}{t}\right) \leq
\frac{1}{t}f^{**}\left(\frac{1}{t}\right) =
\alpha(f)(t).
\]
Therefore
\[
R_\infty^X(t) = \alpha(f)(t) + \beta(f)(t) \leq \alpha(f)(t) + \alpha(f)(t) =
2 \int_0^{\frac{1}{t}} f^*,
\]
which establishes the second inequality.
\end{proof}
What follows is a result of independent interest in the spirit of \cite[Theorem~3.4]{BEP}.
\begin{theorem}
\label{T:Optimal-space-a-integrable}
Let $a \in L^\infty \cap L^1$ be non-trivial, non-negative and non-increasing. Let $Y$ be a rearrangement-invariant space such that
\begin{equation}
\label{LinftyL1cond}
\min\left\{1, \frac{1}{t} \right\} \in Y'.
\end{equation}
For $f \in \measurable$ and $t>0$, set
\[
\alpha(f)(t) = \int_0^{\frac{1}{t}} f^*
\]
and
\[
\rho(f) = \lVert \alpha(f) \rVert_{Y'}.
\]
Then $\rho$ is a rearrangement-invariant norm such that if we set $Z$ to be the \\ rearrangement-invariant space given by $\rho'$, then
\[
S_a\colon Y \rightarrow Z,
\]
and $Z$ is optimal for $S_a$ and $Y$. Furthermore, if the condition \eqref{LinftyL1cond} does not hold, then there is no rearrangement-invariant space $Z$ such that $S_a: Y \rightarrow Z$.
\end{theorem}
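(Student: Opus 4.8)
The plan is to specialise the general machinery to the domain‑side choice $X=L^\infty$, for which $E(L^\infty)$ is the identity $\varphi(t)=t$, and then to harvest Corollary~\ref{MainCorollary} (which itself rests on Theorem~\ref{MainTheorem}) together with the two bracketing Lemmas~\ref{L:Equiv-L1intLinfty} and \ref{L:Reversed-Inequality}. Two preliminary remarks make this transparent. First, with $\varphi(t)=t$ the condition \eqref{ChiInSpaceCondition} for $W=Y'$ is literally \eqref{LinftyL1cond}. Second, by Lemma~\ref{L:Equiv-L1intLinfty} applied with $X=L^\infty$ one has $\alpha(f)\le R_\infty f\le 2\alpha(f)$ pointwise, so $\rho$ is equivalent to $f\mapsto\lVert R_\infty f\rVert_{Y'}$ and the two functionals determine, as a set and up to equivalence of norms, the same space $Z$. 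Now $\alpha$ is sublinear, since subadditivity of $f\mapsto f^{**}$ gives $\alpha(f_1+f_2)\le\alpha(f_1)+\alpha(f_2)$ pointwise and $(\lambda f)^*=\lambda f^*$, and it depends on $f$ only through $f^*$; hence the triangle inequality, positive homogeneity and the nondegeneracy of $\rho$, together with (P2), (P3), (P6), are immediate, while (P4) and (P5) are stable under passing to an equivalent functional and therefore follow from the fact that $f\mapsto\lVert R_\infty f\rVert_{Y'}$ is a rearrangement-invariant norm, which is Theorem~\ref{T:R-gives-norm} with $W=Y'$ and $\varphi(t)=t$ (the hypotheses of Definition~\ref{D:R-infty} are trivially met here). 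So $\rho$ is a rearrangement-invariant norm and $Z$ is a genuine rearrangement-invariant space.

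For the boundedness $S_a\colon Y\to Z$ I would simply invoke Corollary~\ref{MainCorollary} with $X=L^\infty$: indeed $\varphi=E(L^\infty)$ is quasi-concave, unbounded, strictly increasing and vanishes at $0_+$; every rearrangement-invariant space satisfies $Y\subset L^\infty+L^1=X+L^1$; $a\in L^1\cap L^\infty=X'\cap L^\infty$; and \eqref{LinftyL1cond} is \eqref{ChiInSpaceCondition} for $W=Y'$. The corollary then yields $S_a\colon Y\to Z$, the space it produces coinciding with ours up to equivalence of norms. Optimality I would obtain from Theorem~\ref{T:Range-partner-using-R} applied with the domain $Y$ in place of the $X$ there and $R=\alpha$. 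Its hypotheses hold: since $a$ is non-increasing, $a^{**}(t)=\tfrac1t\int_0^t a^*\le\min\{\lVert a\rVert_{L^\infty},\lVert a\rVert_{L^1}/t\}\le C\min\{1,1/t\}$, which lies in $Y'$ by \eqref{LinftyL1cond}, so $a^{**}\in Y'$; and $f\mapsto\lVert\alpha(f^*)\rVert_{Y'}=\rho(f)$ is a rearrangement-invariant norm by the previous paragraph. Condition (ii) of that theorem, $C\,S_a(f^*)(t)\ge\alpha(f)(t)$ for a.e.\ $t$, is precisely Lemma~\ref{L:Reversed-Inequality}. Hence any rearrangement-invariant space $\widetilde Z$ with $S_a\colon Y\to\widetilde Z$ satisfies $Z\hra\widetilde Z$, i.e.\ $Z$ is the optimal range partner of $S_a$ for the domain $Y$.

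It remains to treat the case when \eqref{LinftyL1cond} fails. Suppose, towards a contradiction, that $S_a\colon Y\to\widetilde Z$ for some rearrangement-invariant space $\widetilde Z$. Running the first part of the proof of Theorem~\ref{T:Range-partner-condition} with the domain $Y$ — pair $S_a f^*$ with $\chi_{(0,1)}$, use boundedness together with the self-adjointness \eqref{E:self-adjoint}, and note $S_a\chi_{(0,1)}=a^{**}$ — gives $a^{**}\in Y'$. On the other hand, since $a$ is non-trivial and non-increasing there are $u_0>0$ and $c_0>0$ with $a\ge c_0$ on $(0,u_0)$ (as in the proof of Lemma~\ref{L:Reversed-Inequality}), whence $a^{**}(t)=\tfrac1t\int_0^t a^*\ge\tfrac{c_0}{t}\min\{t,u_0\}=c_0\min\{1,u_0/t\}\ge c_0\min\{1,u_0\}\min\{1,1/t\}$ for every $t>0$. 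By (P2) of $\lVert\cdot\rVert_{Y'}$ this forces $\min\{1,1/t\}\in Y'$, contradicting the failure of \eqref{LinftyL1cond}. Therefore no such $\widetilde Z$ exists.

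Almost all of the work is already invested in the lemmas and earlier theorems, so the argument is mostly a matter of assembling them; the one genuinely new ingredient is the last paragraph, and there the crux — though still elementary — is the pointwise lower bound $c_0\min\{1,u_0\}\min\{1,1/t\}\le a^{**}(t)$ extracted from the non-triviality and monotonicity of $a$, which is exactly what upgrades the abstract necessary condition $a^{**}\in Y'$ into the sharp criterion \eqref{LinftyL1cond}.
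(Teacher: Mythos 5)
Your proposal is correct and follows essentially the same route as the paper: Corollary~\ref{MainCorollary} together with Lemma~\ref{L:Equiv-L1intLinfty} for the boundedness $S_a\colon Y\to Z$, Lemma~\ref{L:Reversed-Inequality} fed into Theorem~\ref{T:Range-partner-using-R} for optimality, and a pointwise lower bound $a^{**}(t)\gtrsim\min\{1,1/t\}$ combined with Theorem~\ref{T:Range-partner-condition} for the necessity of \eqref{LinftyL1cond}. The only cosmetic differences are that you spell out the norm axioms for $\rho$ (which the paper omits, referring to Theorem~\ref{MainTheorem}) and obtain the lower bound on $a^{**}$ by a slightly cleaner one-line estimate than the paper's case analysis.
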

\begin{proof}
The proof of the fact that $\rho$ is a rearrangement-invariant norm is almost identical to that of Theorem~\ref{MainTheorem}, see also \cite[Proposition 3.3]{BEP}, and therefore omitted.
We thus know from Corollary \ref{MainCorollary} and Lemma \ref{L:Equiv-L1intLinfty} that $S_a: Y \rightarrow Z$.

The optimality of $Z$ is a direct consequence of Lemma \ref{L:Reversed-Inequality} and Remark \ref{R:Range-partner-using-R}. Indeed, from Lemma \ref{L:Reversed-Inequality} we have a constant $C>0$ such that, for all $f \in \measurable$ and $t \in (0,\infty)$,
\[
S_a (f^*) (t) \geq C \int_0^{\frac{1}{t}} f^*(s) \diff s,
\]
whence Remark \ref{R:Range-partner-using-R} gives the optimality of $Z$.

It remains to show that if (\ref{LinftyL1cond}) does not hold, then there is no rearrangement-invariant space $Z$ such that $S_a: Y \rightarrow Z$.
Since $a$ is integrable, bounded, non-zero on some set of non-zero measure and non-increasing, we can find a constant $C'>0$ such that
\[
a^{**}(t) \geq
C' \min\left\{1, \frac{1}{t} \right\}
\quad \text{for all $t>0$}.
\]
Indeed, suppose first that $t\le 1$. Then we find $u>0$ such that $\inf_{s \in (0,u)} a(s) = C_1 > 0$. If
$u \geq 1$, then simply
\[
\frac{1}{t} \int_0^t a(s) \diff s \geq \frac{1}{t} t C_1 = C_1.
\]
If $u<1$, then we have
\[
\frac{1}{t} \int_0^t a(s) \diff s \geq \frac{1}{t} \int_0^{\min\{t,u\}} a(s) \diff s
\geq \min\{\frac{1}{t} t C_1, \frac{1}{t} u C_1\} = u C_1.
\]
Now, let $t>1$. Then we set $C_2 = \int_0^1 a(s)\diff s$ and observe that
\[
\frac{1}{t} \int_0^t a(s) \diff s \geq \frac{1}{t} C_2.
\]
Combining these estimates, we arrive at
\[
a^{**}(t) \geq C' \min\left\{1, \frac{1}{t}\right\} \quad \text{for all $t>0$}.
\]
where $C'=\min\{C_1,uC_1,C_2\}$. Therefore $a^{**}\not\in Y'$, whence Theorem \ref{T:Range-partner-condition} implies that there is no rearrangement-invariant space $Z$ such that $S_a\colon Y \to Z$. Indeed this holds since $L^1+L^\infty$ is the largest among all rearrangement-invariant spaces.
\end{proof}

\section{Operator \texorpdfstring{$S_a$}{} on the scale of Lorentz spaces} \label{CH:4}

In this section we shall formulate some sufficient conditions involving the kernel function $a$ under which we can obtain the optimal range partner $Z$ for $S_a$ and a fixed domain Lorentz space $Y$. It turns out that the space $Z$ is also a Lorentz space and, perhaps aside from extremal cases, its exponents only depend on the exponents of the space $Y$ and not on the kernel $a$. 

\begin{theorem}
Let $p\in(1,\infty]$ and $\xi\in(1,\infty)$. Define the operators $S_p$ and $T_p$ by
\begin{equation*}
    (S_p f)(t)=t^{-\tfrac{1}{p}}\int_0^{\tfrac{1}{t}}f(s)s^{-\tfrac{1}{p}}\diff s
\end{equation*}
and
\begin{equation*}
    (T_p f)(t)=\int_0^{\tfrac{1}{t}} f(s)\diff s+ t^{-\tfrac{1}{p}}\int_{\tfrac{1}{t}}^\infty f(s) s^{-\tfrac{1}{p}}\diff s,     
\end{equation*}
for $t\in(0,\infty)$ and all $f\in\measurable$ for which the right sides make sense.
\begin{itemize}
    \item[(i)] It holds that
    \begin{equation}\label{E-R-1-estimate-edge-case}
        S_p \colon L^{p',1} \to L^{p,\infty};\quad S_p \colon L^{\infty} \to L^{1,\infty}.
    \end{equation}
    \item [(ii)] If $\xi<p$ and $\eta \in [1,\infty]$, then 
    \begin{equation}\label{E-R-1-estimate}
        S_p\colon L^{\xi',\eta} \to L^{\xi,\eta}.
    \end{equation}
    \item [(iii)] 
    It holds that
        \begin{equation}\label{E-R-infty-estimate-edge-case}
            T_p\colon L^{p',1}\to L^{p,\infty};\quad T_p\colon L^{1}\to L^\infty.
        \end{equation}
    \item[(iv)]
    If $p<\xi$ and $\eta\in [1,\infty]$, then 
    \begin{equation}\label{E-R-infty-estimate}
        T_p \colon L^{\xi',\eta} \to L^{\xi,\eta}.
    \end{equation}
    \item[(v)] The following inequality holds for all $f\in\measurable$, $\xi \in (1,\infty)$ and $\eta \in [1,\infty]$
    \begin{equation}\label{E:Rho-in-Lorentz-Lower-Estimate}
    \lVert f \rVert_{\xi',\eta} \leq \left\lVert \int_0^{\tfrac{1}{t}} f^*\right\rVert_{\xi,\eta}.
    \end{equation}
\end{itemize}

\end{theorem}
\begin{proof}
    We shall prove statements \textit{(i)}-\textit{(iv)} as the statement \textit{(v)} is very simple and can be found in a more complete form in \cite[Proposition 3.7]{BEP}. Let $f\in\measurable$. It then holds that $|S_p(f)|\leq S_p |f|$ whenever the function on the right is finite a.e. Hence we may assume $f\in\measurablep$. Since $S_p(f)$ is non-increasing it follows that
    \begin{equation*}
        \lVert S_p f \rVert_{L^{p,\infty}}=\sup_{t\in(0,\infty)}\int_0^{\tfrac{1}{t}}f(s)s^{-\tfrac{1}{p}}\diff s = \lVert f \rVert_{L^{p',1}}.
    \end{equation*}
    On the other hand, let $f\in L^{\infty}$. Then it holds that
    \begin{equation*}
    \begin{split}
        \lVert S_p f\rVert_{L^{1,\infty}}&=\sup_{t\in(0,\infty)}t^{1-\tfrac{1}{p}}\int_{0}^{\tfrac{1}{t}}f(s)s^{-\tfrac{1}{p}} \diff s
        \leq \lVert f \rVert_{L^{\infty}}\sup_{t\in(0,\infty)} t^{1-\tfrac{1}{p}}(\tfrac{1}{t})^{1-\tfrac{1}{p}}\tfrac{1}{1-\tfrac{1}{p}}\\
        &=p'\lVert f \rVert_{L^\infty}.
    \end{split}    
    \end{equation*}
    We have shown that \textit{(i)} holds. It follows immediately from the Marcinkiewicz interpolation theorem (see e.g. \cite[Theorem 4.4.13]{BS}) that \textit{(ii)} holds.
    
   We also have $|T_p(f)|\leq T_p(|f|)$ whenever the function on the right is finite a.e. Hence we may once again assume $f\in\measurablep$ to show \textit{(iii)}. Notice that $T_p(f)$ is non-increasing as it may be written in the form
   \begin{equation*}
       T_p(f)(t)=\int_0^\infty \min\left\{1,(ts)^{-\tfrac{1}{p}}\right\} f^*(s) \diff s.
   \end{equation*}
   From the definition and by sub-additivity of supremum we have
   \begin{equation*}
   \begin{split}
       \lVert T_p(f)\rVert_{L^{p,\infty}}&\leq\sup_{t\in(0,\infty)} t^{\tfrac{1}{p}}\int_0^{\tfrac{1}{t}}f(s)\diff s + \sup_{t\in(0,\infty)} t^{\tfrac{1}{p}}t^{-\tfrac{1}{p}}\int_{\tfrac{1}{t}}^\infty f(s) s^{-\tfrac{1}{p}}\diff s\\
       &=\lVert S_{\infty} f \rVert_{L^{p,\infty}}+\lVert f \rVert_{L^{p',1}}.
   \end{split}
   \end{equation*}
   Now by \textit{(ii)} in case $p<\infty$ or by \textit{(i)} in case $p=\infty$ we have a constant $C>0$ such that $\lVert S_\infty(f) \rVert_{L^{p,\infty}}\leq C \lVert f \rVert_{L^{p',1}}$ and \textit{(iii)} follows. Moreover, due to Marcinkiewicz interpolation theorem \textit{(iv)} holds. This completes the proof.
\end{proof}

Now that we have established some necessary estimates, it is time to use them to calculate the optimal range partners for $S_a$, in the case when the domain space is a Lorentz space. Let us first fix some notation. If it is stated what space $X$ is, we automatically assume that $\varphi=E(X)$ and that the operators $R_\infty^X$ and $R_1^X$ are defined as in Definitions \ref{D:R-infty} and \ref{D:R-1}. Furthermore by $Z_\infty$ and $Z_1$ we shall denote the spaces given by norms associated to $\lVert R_\infty^X f \rVert_{Y'}$ and $\lVert R_1^X f \rVert_{Y'}$ respectively, where $Y$ is the fixed domain space.
\begin{lemma}
\label{L:Optimal-Space-Lower-Constraint}
Let $\xi\in(1,\infty)$, $\eta\in[1,\infty]$ and $a \in \measurablep$ a non-increasing function. If $W$ is a rearrangement-invariant space such that $S_a\colon L^{\xi,\eta} \rightarrow W$, then $L^{\xi',\eta}\hookrightarrow W$.
\end{lemma}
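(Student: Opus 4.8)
The plan is to reduce to a boundedness statement into a Lorentz space by passing to associate spaces, and then combine the reversed pointwise inequality of Lemma~\ref{L:Reversed-Inequality} with the lower Lorentz estimate~\eqref{E:Rho-in-Lorentz-Lower-Estimate}. A direct approach starting from $S_a\colon L^{\xi,\eta}\to W$ would instead force us to ``invert'' the sublinear map $g\mapsto\bigl(t\mapsto\int_0^{1/t}g^*\bigr)$ on $W$, whose range is restricted (a primitive that must become concave after the substitution $u=1/t$), so dualising first is the cleaner route. We assume throughout that $a$ is non-trivial, as is implicit already in Lemma~\ref{L:Reversed-Inequality}: if $a=0$ a.e.\ then $S_a\equiv0$, the hypothesis holds for every rearrangement-invariant $W$, and the conclusion fails.

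Since $\xi\in(1,\infty)$ and $\eta\in[1,\infty]$, the conjugate indices $\xi'$, $\eta'$ are again admissible, so the spaces $L^{\xi',\eta'}$ and $L^{\xi,\eta'}$ are (equivalent to) rearrangement-invariant spaces, $(L^{\xi,\eta})'=L^{\xi',\eta'}$, and $(L^{\xi,\eta'})'=L^{\xi',\eta}$. By self-adjointness~\eqref{E:Self-Adjoint-main}, the hypothesis $S_a\colon L^{\xi,\eta}\to W$ is equivalent to $S_a\colon W'\to L^{\xi',\eta'}$; let $K>0$ be such that $\lVert S_a h\rVert_{\xi',\eta'}\le K\lVert h\rVert_{W'}$ for all $h\in W'$. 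Fix $g\in W'$ and apply this with $h=g^*$, noting $\lVert g^*\rVert_{W'}=\lVert g\rVert_{W'}$ since $W'$ is rearrangement-invariant. Lemma~\ref{L:Reversed-Inequality} supplies $C>0$ with $S_a(g^*)(t)\ge C\int_0^{1/t}g^*(s)\,\diff s$ for all $t>0$; the right-hand side is non-negative and pointwise below $S_a(g^*)$, so property (P2) of the norm in $L^{\xi',\eta'}$ (together with the fact that $t\mapsto\int_0^{1/t}g^*$ is non-increasing) gives
\[
\Bigl\lVert\, t\mapsto\textstyle\int_0^{1/t} g^*(s)\,\diff s\,\Bigr\rVert_{\xi',\eta'}\ \le\ \frac{K}{C}\,\lVert g\rVert_{W'}.
\]

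To finish, I would invoke~\eqref{E:Rho-in-Lorentz-Lower-Estimate} with $\xi'$ and $\eta'$ playing the roles of $\xi$ and $\eta$: this gives $\lVert g\rVert_{\xi,\eta'}=\lVert g\rVert_{(\xi')',\eta'}\le\bigl\lVert t\mapsto\int_0^{1/t}g^*\bigr\rVert_{\xi',\eta'}$ for every $g$ (the inequality being trivial when $g\notin W'$). Chaining this with the previous display shows $\lVert g\rVert_{\xi,\eta'}\le\frac{K}{C}\lVert g\rVert_{W'}$ for all $g$, i.e.\ $W'\hra L^{\xi,\eta'}$; passing to associate spaces via~\eqref{I:Asociate-embedding} and using $W''=W$ together with $(L^{\xi,\eta'})'=L^{\xi',\eta}$ yields $L^{\xi',\eta}\hra W$. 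The proof is essentially bookkeeping once the two cited inequalities are in hand; the step most likely to cause trouble --- and the one I would check most carefully --- is the index arithmetic, namely ensuring every Lorentz space that appears is genuinely normable and that the identities $(\xi')'=\xi$, $(\eta')'=\eta$ and the duality $(L^{p,q})'=L^{p',q'}$ are applied in the right places.
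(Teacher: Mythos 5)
Your proof is correct, and it rests on the same two key ingredients as the paper's --- the reverse pointwise estimate of Lemma~\ref{L:Reversed-Inequality} and the lower Lorentz estimate~\eqref{E:Rho-in-Lorentz-Lower-Estimate} --- but the packaging is genuinely different. The paper first forms the auxiliary space $Z$ determined by the associate of $\rho(f)=\bigl\lVert\int_0^{1/t}f^*(s)\,\diff s\bigr\rVert_{\xi',\eta'}$, invoking Theorem~\ref{T:Optimal-space-a-integrable} to know that $\rho$ is a rearrangement-invariant norm (which requires checking $\min\{1,\tfrac1t\}\in L^{\xi',\eta'}$) and part (ii) of Theorem~\ref{T:Range-partner-using-R} to conclude $Z\hra W$; the estimate \eqref{E:Rho-in-Lorentz-Lower-Estimate} then gives $L^{\xi',\eta}\hra Z$, hence $L^{\xi',\eta}\hra W$. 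You instead dualize at the outset via \eqref{E:Self-Adjoint-main} and work entirely with associate norms, obtaining $W'\hra L^{\xi,\eta'}$ directly and then passing back with \eqref{I:Asociate-embedding}; this effectively inlines the proof of Theorem~\ref{T:Range-partner-using-R}(ii) in the one case needed and dispenses with the intermediate space $Z$ altogether, so you never have to verify that $\rho$ is a norm or that condition \eqref{LinftyL1cond} holds. What the paper's detour buys is reuse of its general machinery (and the space $Z$, which it exploits elsewhere); what your route buys is a shorter, self-contained argument using only \eqref{E:Self-Adjoint-main}, Lemma~\ref{L:Reversed-Inequality}, \eqref{E:Rho-in-Lorentz-Lower-Estimate} and Lorentz duality. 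Your observation that $a$ must be non-trivial is also well taken: the statement omits it, but the paper's proof, like yours, needs it through Lemma~\ref{L:Reversed-Inequality}, and for $a\equiv 0$ the conclusion indeed fails.
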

\begin{proof}
    Let
    \begin{equation*}
        Tf(t) = \int_0^{\tfrac{1}{t}} f^*(s) \diff s \quad \text{for $f \in \measurable$, $t>0$},
    \end{equation*}
    then by Theorem \ref{T:Optimal-space-a-integrable} we know that the expression $\lVert Tf \rVert_{\xi',\eta'}$ defines a rearrangement-invariant norm since $\min\{1,\tfrac{1}{t}\}\in L^{\xi',\eta'}$. Furthermore from Lemma \ref{L:Reversed-Inequality} we have a constant $C>0$ such that $S_a f^* \geq CTf$ for all $f\in\measurable$. If we now set $Z$ to be the rearrangement-invariant space determined by the norm associate to $\lVert Tf \rVert_{\xi',\eta'}$, then Remark \ref{R:Range-partner-using-R} gives $Z\hookrightarrow W$. Furthermore \eqref{E:Rho-in-Lorentz-Lower-Estimate} gives $Z' \hookrightarrow L^{\xi,\eta'}$ or equivalently $L^{\xi',\eta} \hookrightarrow Z$, whence $L^{\xi',\eta} \hookrightarrow W$.
\end{proof}
\begin{theorem}\label{T:Optimal-Partner-L1-Lorentz}
    Let $p \in [1,\infty)$ and $a \in \measurablep$ a non-increasing function such that $a \in L^1\cap L^{p',\infty}$. If $\xi \in (1,\infty)$, $p <\xi $, $\eta \in [1,\infty]$, then $S_a\colon L^{\xi,\eta} \rightarrow L^{\xi',\eta}$ and furthermore, $L^{\xi',\eta}$ is the optimal range partner for $S_a$ and $L^{\xi,\eta}$.
\end{theorem}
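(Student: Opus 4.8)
The plan is to establish the boundedness $S_a\colon L^{\xi,\eta}\to L^{\xi',\eta}$ by a direct pointwise domination of $S_a f^*$ combined with the Lorentz-norm estimate~\eqref{E-R-1-estimate}, and then to read off optimality from Lemma~\ref{L:Optimal-Space-Lower-Constraint} with no further work.

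Since $a$ is non-increasing, Lemma~\ref{L:Tf-Estimate-With-Asterix} gives $(S_a f)^*\le S_a(f^*)$, and because $S_a(f^*)$ is itself non-increasing this yields $\lVert S_a f\rVert_{\xi',\eta}\le\lVert S_a(f^*)\rVert_{\xi',\eta}$; so it is enough to control $S_a(f^*)$. Writing $\tfrac1{p'}=1-\tfrac1p$ (with the convention $\tfrac1{p'}=0$ when $p=1$), I will show that there is a constant $C_a$, depending only on $\lVert a\rVert_{L^1}$ and $\lVert a\rVert_{p',\infty}$, with
\[
S_a(f^*)(t)=\int_0^\infty a(st)f^*(s)\,\diff s\le C_a\, t^{-\frac1{p'}}\int_0^{1/t}s^{-\frac1{p'}}f^*(s)\,\diff s\qquad\text{for every }t>0,\ f\in\measurable .
\]
To prove this I split the integral at $s=\tfrac1t$. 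On $(0,\tfrac1t)$ I use $a(u)\le\lVert a\rVert_{p',\infty}\,u^{-1/p'}$, which is valid for all $u>0$ since $a=a^*$; this bounds $\int_0^{1/t}a(st)f^*(s)\,\diff s$ by $\lVert a\rVert_{p',\infty}\,t^{-1/p'}\int_0^{1/t}s^{-1/p'}f^*(s)\,\diff s$. On $(\tfrac1t,\infty)$ I use monotonicity of $f^*$ and $a\in L^1$: $\int_{1/t}^\infty a(st)f^*(s)\,\diff s\le f^*(\tfrac1t)\int_{1/t}^\infty a(st)\,\diff s=\tfrac1t f^*(\tfrac1t)\int_1^\infty a\le\tfrac{\lVert a\rVert_{L^1}}{t}f^*(\tfrac1t)$; and since $f^*$ is non-increasing and $1-\tfrac1{p'}=\tfrac1p>0$,
\[
\int_0^{1/t}s^{-\frac1{p'}}f^*(s)\,\diff s\ \ge\ f^*(\tfrac1t)\int_0^{1/t}s^{-\frac1{p'}}\,\diff s\ =\ p\,t^{-\frac1p}f^*(\tfrac1t),
\]
so that $\tfrac1t f^*(\tfrac1t)=t^{-1/p'}\,t^{-1/p}f^*(\tfrac1t)\le\tfrac1p\,t^{-1/p'}\int_0^{1/t}s^{-1/p'}f^*(s)\,\diff s$. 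Adding the two contributions gives the displayed estimate with $C_a=\lVert a\rVert_{p',\infty}+\tfrac1p\lVert a\rVert_{L^1}$.

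Now I apply~\eqref{E-R-1-estimate} with its parameter ``$p$'' replaced by $p'$, its ``$\xi$'' replaced by $\xi'$, and ``$\eta$'' kept as $\eta$. The standing hypothesis ``$\xi\in(1,\infty)$'' becomes $\xi'\in(1,\infty)$, which holds since $\xi\in(1,\infty)$, and the condition ``$\xi<p$'' becomes $\xi'<p'$, which is exactly $p<\xi$; thus~\eqref{E-R-1-estimate} provides a constant $C$ with $\bigl\lVert t^{-1/p'}\int_0^{1/t}s^{-1/p'}f^*(s)\,\diff s\bigr\rVert_{\xi',\eta}\le C\lVert f\rVert_{(\xi')',\eta}=C\lVert f\rVert_{\xi,\eta}$. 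Combining this with the two inequalities of the previous paragraph yields $\lVert S_a f\rVert_{\xi',\eta}\le C_aC\lVert f\rVert_{\xi,\eta}$, i.e.\ $S_a\colon L^{\xi,\eta}\to L^{\xi',\eta}$. (For $p=1$ this is the ``$p'=\infty$'' instance of~\eqref{E-R-1-estimate}, where the weights $s^{-1/p'},t^{-1/p'}$ are trivial; alternatively, since then $a\in L^1\cap L^\infty$, one may quote Theorem~\ref{T:Optimal-space-a-integrable} directly.)

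For optimality there is nothing left to do: Lemma~\ref{L:Optimal-Space-Lower-Constraint} applies because $a$ is non-increasing and $\xi\in(1,\infty)$, $\eta\in[1,\infty]$, and it says that every rearrangement-invariant space $W$ with $S_a\colon L^{\xi,\eta}\to W$ satisfies $L^{\xi',\eta}\hookrightarrow W$. Together with the boundedness just proved, $L^{\xi',\eta}$ is therefore the smallest rearrangement-invariant space into which $S_a$ maps $L^{\xi,\eta}$, i.e.\ the optimal range partner. The only genuinely new ingredient is the pointwise estimate above, and the delicate point there is that the tail $\int_{1/t}^\infty$ must be absorbed into the main term; this is exactly where the bookkeeping $\tfrac1p+\tfrac1{p'}=1$ enters and where the hypothesis $a\in L^1$ (not merely $a\in L^{p',\infty}$) is used.
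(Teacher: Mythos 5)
Your proof is correct, but the boundedness half follows a genuinely different route from the paper's. The paper sets $X=L^{p,1}$, invokes Corollary~\ref{MainCorollary2} --- i.e.\ the $K$-functional machinery behind the operator $R_1$ of Definition~\ref{D:R-1} --- to get $S_a\colon L^{\xi,\eta}\to Z_1$, and only afterwards uses \eqref{E-R-1-estimate} on the associate side (with parameters $\xi',\eta'$) to deduce $L^{\xi,\eta'}\hookrightarrow Z_1'$, hence $Z_1\hookrightarrow L^{\xi',\eta}$. You instead prove by hand the pointwise bound $S_a(f^*)(t)\le C_a\,t^{-1/p'}\int_0^{1/t}s^{-1/p'}f^*(s)\,\diff s$, which is precisely the estimate $(S_af)^*\lesssim R_1f$ specialized to $X=L^{p,1}$ that the paper extracts from Proposition~\ref{P-Main-Kfunc} and Milman's theorem, and then you apply \eqref{E-R-1-estimate} directly on the range side (with $p\mapsto p'$, $\xi\mapsto\xi'$, $\eta$ unchanged; the lemma's hypothesis $\xi'<p'$ is exactly $p<\xi$, and $p'=\infty$ is admissible when $p=1$). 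Your details check out: the reduction $\lVert S_af\rVert_{\xi',\eta}\le\lVert S_af^*\rVert_{\xi',\eta}$ via Lemma~\ref{L:Tf-Estimate-With-Asterix}, the bound $a(u)\le\lVert a\rVert_{p',\infty}u^{-1/p'}$ (valid a.e., which is all the integral needs), and the absorption of the tail through $\int_0^{1/t}s^{-1/p'}f^*\ge p\,t^{-1/p}f^*(1/t)$, which is where $a\in L^1$ enters. What your route buys is brevity and self-containedness: no associate spaces, no verification of the hypotheses of Definition~\ref{D:R-1} and condition \eqref{Norm-Condition-L1}, no norm-versus-quasinorm bookkeeping for $\rho$, and no dilation factor. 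What the paper's route buys is generality: the same $R_1$ framework works for arbitrary rearrangement-invariant domain spaces, and the Lorentz theorem is then a mere computation inside it. The optimality half is identical in both arguments --- Lemma~\ref{L:Optimal-Space-Lower-Constraint} applied verbatim (note that both you and the paper tacitly need $a\not\equiv0$ there, since for $a\equiv0$ optimality of $L^{\xi',\eta}$ fails trivially).
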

\begin{proof}
    First set $X=L^{p,1}$ and $Y=L^{\xi,\eta}$. Then since $p<\infty$ we have up to a constant that $\overline{\varphi}(t) = t^{1/p}$, $t>0$ which is a quasi-concave, unbounded, strictly increasing function with $\overline{\varphi}(0_+) = 0$. Furthermore $Y=L^{\xi,\eta} \subset X+L^\infty$ since $p<\xi$, also $a \in X'\cap L^1$ and the condition \eqref{Norm-Condition-L1} holds for $W=L^{\xi',\eta'}$. Therefore Corollary \ref{MainCorollary2} gives $T\colon~L^{\xi,\eta}~\rightarrow~Z_1$.
    It is easy to calculate that $\tilde{\psi}(t) = \tfrac{1}{t}$ and so by \eqref{E-R-1-estimate} we have a constant $C>0$ such that for all $f\in\measurable$
    \begin{equation*}
        \lVert f \rVert_{Z_1'} = \left\lVert t^{-\tfrac{1}{p}}\int_0^{\tfrac{1}{t}} f^*(s)s^{-\tfrac{1}{p}}\diff s\right\rVert_{\xi',\eta'} \leq C  \lVert f \rVert_{\xi,\eta'}.
    \end{equation*}
    In other words $L^{\xi,\eta'}\hookrightarrow Z_1'$ or equivalently $Z_1 \hookrightarrow L^{\xi',\eta}$, therefore $S_a\colon L^{\xi,\eta} \rightarrow L^{\xi',\eta}$.
    The optimality of $L^{\xi',\eta}$ is now an immediate consequence of Lemma \ref{L:Optimal-Space-Lower-Constraint}.
\end{proof}

\begin{theorem}\label{T:Optimal-Partner-Linfty-Lorentz}
    Let $q \in(1,\infty]$ and $a\in\measurablep$ a non-increasing function such that $a\in L^1 \cap L^{q',\infty}$. If $\xi\in(1,\infty)$, $\xi<q$, $\eta \in [1,\infty]$ then $S_a\colon L^{\xi,\eta} \rightarrow L^{\xi',\eta}$ and furthermore, $L^{\xi',\eta}$ is the optimal range partner for $S_a$ and $L^{\xi,\eta}$.
\end{theorem}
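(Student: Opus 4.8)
The plan is to run the argument of Theorem~\ref{T:Optimal-Partner-L1-Lorentz} with $R_1$ and Corollary~\ref{MainCorollary2} replaced throughout by $R_\infty$ and Corollary~\ref{MainCorollary}: first deduce $S_a\colon L^{\xi,\eta}\to Z_\infty$, then compress $Z_\infty$ onto $L^{\xi',\eta}$ using part~\textit{(iii)} of the technical lemma above, and finally obtain optimality from Lemma~\ref{L:Optimal-Space-Lower-Constraint}. I would put $X=L^{q,1}$ and $Y=L^{\xi,\eta}$ (if $q=\infty$, replace $X$ by $L^{\tilde q,1}$ for an arbitrary finite $\tilde q\in(\xi,\infty)$, which changes nothing below since $a\in L^{q',\infty}\cap L^\infty$ forces $a\in L^{\tilde q',\infty}\cap L^\infty$). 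Using $\lVert E_t\rVert_{L^{q,1}}=t^{1/q}$ one finds $\varphi:=E(X)(t)=t^{1/q'}$, which is quasi-concave, unbounded, strictly increasing and satisfies $\varphi(0_+)=0$, while $\varphi_{X'}=\varphi_{L^{q',\infty}}$ equals $t\mapsto t^{1/q'}$, hence is strictly increasing and unbounded; so $X$ and $\varphi$ are admissible in Definition~\ref{D:R-infty}.

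The remaining hypotheses of Corollary~\ref{MainCorollary} are verified as follows. The membership $a\in X'\cap L^\infty=L^{q',\infty}\cap L^\infty$ is the hypothesis on $a$ (here one uses in particular that $a$ is bounded, the Marcinkiewicz part $L^{q',\infty}$ being the associate space of $X$). The embedding $Y=L^{\xi,\eta}\hra X+L^1=L^{q,1}+L^1$ follows from $\xi<q$: one has $L^{\xi,\eta}\hra L^1+L^{\xi,\eta}\hra L^1+L^{\xi,\infty}$ by embedding~(a), and then $L^1+L^{\xi,\infty}\hra L^1+L^{q,1}$ by embedding~(c). Finally, condition~\eqref{ChiInSpaceCondition} for $W=Y'=L^{\xi',\eta'}$ amounts to $\min\{t^{-1/q'},1\}\in L^{\xi',\eta'}$, which again holds precisely because $\xi<q$, i.e.\ $\xi'>q'$, so that the tail $t^{-1/q'}$ is tested against an admissible Lorentz weight. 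Corollary~\ref{MainCorollary} then yields that $\rho(f)=\lVert R_\infty f\rVert_{Y'}$ is a rearrangement-invariant norm and that $S_a\colon L^{\xi,\eta}\to Z_\infty$, with $Z_\infty$ determined by $\rho'$.

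The crux is an explicit computation of $R_\infty$. By~\eqref{E:def-psi}, $\psi(t)=\varphi_{X'}^{-1}(1/\varphi(t))=1/t$, so~\eqref{E:def-alpha} gives $\alpha(f)(t)=\int_0^{1/t}f^*(s)\diff s$; and~\eqref{E:def-beta}, combined with $\lVert g\rVert_{L^{q,1}}=\int_0^\infty s^{1/q-1}g^*(s)\diff s$, the identity $1/q-1=-1/q'$, and the fact that $(f^*\chi_{(1/t,\infty)})^*$ is the translate $f^*(\cdot+1/t)$, shows that $\beta(f)(t)$ is comparable, with constants depending only on $q$, to $t^{-1/q'}\int_{1/t}^\infty s^{-1/q'}f^*(s)\diff s$. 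Hence $R_\infty f$ agrees, up to harmless constants, with the left-hand side of~\eqref{E-R-infty-estimate} for the parameter value $p=q'$. Applying~\eqref{E-R-infty-estimate} with its $p,\xi,\eta$ specialised to $q',\xi',\eta'$ --- which is legitimate because the required $q'<\xi'$ is just $\xi<q$ rewritten --- produces a constant $C>0$ with
\[
\lVert f\rVert_{Z_\infty'}=\rho(f)=\lVert R_\infty f\rVert_{\xi',\eta'}\leq C\lVert f\rVert_{\xi,\eta'}\quad\text{for all }f\in\measurable,
\]
that is, $L^{\xi,\eta'}\hra Z_\infty'$, or equivalently $Z_\infty\hra L^{\xi',\eta}$. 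Together with $S_a\colon L^{\xi,\eta}\to Z_\infty$ this gives $S_a\colon L^{\xi,\eta}\to L^{\xi',\eta}$.

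Optimality is then immediate: $a$ being non-increasing, Lemma~\ref{L:Optimal-Space-Lower-Constraint} applies and shows that every rearrangement-invariant space $W$ with $S_a\colon L^{\xi,\eta}\to W$ obeys $L^{\xi',\eta}\hra W$. I expect the real work to lie not in any single estimate but in the bookkeeping: carrying the explicit form of $R_\infty$ through and ensuring the parameter restrictions of~\eqref{E-R-infty-estimate} are honoured --- the inequality $\xi<q$ is consumed in three independent places (the embedding $Y\subset X+L^1$, condition~\eqref{ChiInSpaceCondition}, and the applicability of~\eqref{E-R-infty-estimate}) --- together with the degenerate endpoint $q=\infty$ and the fact that, just as in Theorem~\ref{T:Optimal-Partner-L1-Lorentz}, the borderline exponent $\xi=q$ is not reached by this method.
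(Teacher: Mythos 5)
Your proposal follows essentially the same route as the paper's proof: take $X=L^{q,1}$, $Y=L^{\xi,\eta}$, apply the $R_\infty$-corollary (Corollary~\ref{MainCorollary}) to get $S_a\colon L^{\xi,\eta}\to Z_\infty$, identify $R_\infty$ with the expression in \eqref{E-R-infty-estimate} for $p=q'$ (legitimate since $q'<\xi'$), and invoke Lemma~\ref{L:Optimal-Space-Lower-Constraint} for optimality. Your write-up is in fact somewhat more careful than the paper's --- you treat the endpoint $q=\infty$, where $L^{\infty,1}$ degenerates, and you cite Corollary~\ref{MainCorollary} and condition~\eqref{ChiInSpaceCondition} where the paper miswrites \ref{MainCorollary2} and \eqref{RINormCondition} --- though, exactly like the paper, you read the hypothesis on $a$ as $a\in L^\infty\cap L^{q',\infty}$ (the bounded-part analogue of Theorem~\ref{T:Optimal-Partner-L1-Lorentz}) rather than the literally stated $a\in L^1\cap L^{q',\infty}$.
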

\begin{proof}
    First set $X=L^{q,1}$ and $Y=L^{\xi,\eta}$. Then since $q>1$ we have up to a constant that $\varphi(t)=t^{\tfrac{1}{q'}}$, $t>0$, which is a quasi-concave, unbounded, strictly increasing function. Furthermore $Y=L^{\xi,\eta} \subset X+L^1$ since $\xi<q$, also $a \in L^\infty \cap X'$ and the condition \eqref{RINormCondition} holds for $W=Y'=L^{\xi',\eta'}$. Therefore Corollary \ref{MainCorollary2} gives $S_a \colon L^{\xi,\eta'} \rightarrow Z_\infty$. It is easy to calculate that $\psi(t)=\tfrac{1}{t}$, $t>0$, and by \eqref{E-R-infty-estimate} we have a constant $C>0$ such that for all $f\in\measurable$
    \begin{equation*}
        \lVert f \rVert_{Z_\infty'} =
        \left\lVert \int_0^{\tfrac{1}{t}}f^*(s)\diff s + t^{-\tfrac{1}{q'}}\int_{\tfrac{1}{t}}^\infty f^*(s) s^{-\tfrac{1}{q'}} \diff s \right\rVert_{\xi',\eta'} \leq C \lVert f \rVert_{\xi,\eta'}.
    \end{equation*}
    In other words $L^{\xi,\eta'}\hookrightarrow Z_\infty'$ or equivalently $Z_\infty \hookrightarrow L^{\xi',\eta}$, therefore $S_a\colon L^{\xi,\eta} \rightarrow L^{\xi',\eta}$. The optimality of $L^{\xi',\eta}$ is now an immediate consequence of Lemma \ref{L:Optimal-Space-Lower-Constraint}.
\end{proof}

Now that we have these optimal results for $a\in L^\infty$ and $a\in L^1$ it is time to apply them to a general $a\in \measurablep$. We recall that by Theorem \ref{T:Range-partner-condition} we require $a$ to be in at least some rearrangement-invariant space, otherwise there is no hope of finding a range partner for $S_a$ and any fixed rearrangement-invariant domain space. Since $L^1 + L^\infty$ is the largest rearrangement-invariant space we can assume $a\in L^1+L^\infty$, which allows us to divide $a$ into its integrable and bounded parts and then use the previous theorems and the fact, that $S_a$ is linear in $a$.

\begin{definition}
Given $a\in L^1+L^\infty$ we define the functions $a_1$, $a_\infty$ with following formulas
\begin{equation*}
    \begin{split}
        a_1(t) &= (a(t)- a(1))\chi_{(0,1)}\quad \text{for $t>0$} \\
        a_\infty(t)&=\min\{a(1),a(t)\}\quad \text{for $t>0$}.
    \end{split}
\end{equation*}
\end{definition}
Note that $a = a_1 + a_\infty$ and that by \eqref{E:K-traditional} we have $a_1 \in L^1$ and $a_\infty \in L^\infty$.

\begin{theorem}\label{T:Optimal-Partner-general-Lorentz}
    Let $a\in L^1+L^\infty$ be non-increasing. Let $p\in[1,\infty)$, $q\in(1,\infty]$ be such that $a_1\in L^{p',\infty}$, $a_\infty \in L^{q',\infty}$. If $\xi\in (p,q)$, $\eta \in [1,\infty]$, then $S_a\colon L^{\xi,\eta} \rightarrow L^{\xi',\eta}$ and furthermore, $L^{\xi',\eta}$ is the optimal range partner for $S_a$ and $L^{\xi,\eta}$.
\end{theorem}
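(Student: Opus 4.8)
The plan is to reduce everything to the two Lorentz-scale results already proved, by splitting the kernel as $a=a_1+a_\infty$ (an $L^1$ part plus an $L^\infty$ part) and using that $a\mapsto S_a$ is linear, so $S_a=S_{a_1}+S_{a_\infty}$; the boundedness statement will then be the sum of two boundedness statements with a common target, and the optimality will come straight from the universal lower estimate of Lemma~\ref{L:Optimal-Space-Lower-Constraint}, which does not see the splitting at all.

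First I would verify that $a_1$ and $a_\infty$ are non-negative and non-increasing on $(0,\infty)$. For $a_1=(a-a(1))\chi_{(0,1)}$ this holds because on $(0,1)$ the non-increasing function $a$ satisfies $a\ge a(1)$, so $a-a(1)$ is non-negative and non-increasing there, while $a_1\equiv0$ on $[1,\infty)$; for $a_\infty=\min\{a(1),a\}$ it holds because $a_\infty$ is the constant $a(1)$ on $(0,1]$ and coincides with $a$ on $[1,\infty)$. By the remark following the definition of $a_1,a_\infty$ one has $a_1\in L^1$ and $a_\infty\in L^\infty$, and by hypothesis $a_1\in L^{p',\infty}$ and $a_\infty\in L^{q',\infty}$; hence $a_1\in L^1\cap L^{p',\infty}$ and $a_\infty\in L^\infty\cap L^{q',\infty}$. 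Finally, $\xi\in(p,q)$ yields simultaneously $p<\xi$ and $\xi<q$, and also $\xi\in(1,\infty)$ since $p\ge1$ and $q\le\infty$.

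Now I would apply Theorem~\ref{T:Optimal-Partner-L1-Lorentz} to the kernel $a_1$, with its exponent taken to be $p$: from $a_1\in L^1\cap L^{p',\infty}$, $p<\xi$ and $\eta\in[1,\infty]$ it gives $S_{a_1}\colon L^{\xi,\eta}\to L^{\xi',\eta}$. Likewise I would apply Theorem~\ref{T:Optimal-Partner-Linfty-Lorentz} to the kernel $a_\infty$, with its exponent taken to be $q$: from $a_\infty\in L^\infty\cap L^{q',\infty}$, $\xi<q$ and $\eta\in[1,\infty]$ it gives $S_{a_\infty}\colon L^{\xi,\eta}\to L^{\xi',\eta}$. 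Since $S_a=S_{a_1}+S_{a_\infty}$ and both summands map $L^{\xi,\eta}$ boundedly into the single space $L^{\xi',\eta}$, the triangle inequality for $\lVert\cdot\rVert_{L^{\xi',\eta}}$ gives $S_a\colon L^{\xi,\eta}\to L^{\xi',\eta}$.

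For optimality I would invoke Lemma~\ref{L:Optimal-Space-Lower-Constraint} for the kernel $a$ itself; this is exactly the place where the decomposition plays no role, since that lemma (through Lemma~\ref{L:Reversed-Inequality}) asks of the kernel only that it be non-negative, non-increasing and not identically zero on a set of positive measure, with no ceiling on $a$. It asserts that any rearrangement-invariant space $W$ with $S_a\colon L^{\xi,\eta}\to W$ must satisfy $L^{\xi',\eta}\hookrightarrow W$. Combined with the boundedness $S_a\colon L^{\xi,\eta}\to L^{\xi',\eta}$ just obtained, and with the fact that $L^{\xi',\eta}$ is itself a rearrangement-invariant space ($\xi'\in(1,\infty)$, $\eta\in[1,\infty]$), this identifies $L^{\xi',\eta}$ as the smallest range partner, hence the optimal one. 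I do not expect a genuine obstacle: the analytic heart of the matter --- the Calder\'on-type pointwise estimates for $R_1,R_\infty$ on the Lorentz scale, the associated $K$-functional identities, and the lower bound of Lemma~\ref{L:Reversed-Inequality} --- is already in hand, so this proof is essentially an assembly. The only points meriting care are that the monotonicity and membership properties of $a_1$ and $a_\infty$ match precisely the hypotheses consumed by Theorems~\ref{T:Optimal-Partner-L1-Lorentz} and~\ref{T:Optimal-Partner-Linfty-Lorentz}, and the bookkeeping by which the single condition $\xi\in(p,q)$ feeds ``$p<\xi$'' into the first and ``$\xi<q$'' into the second.
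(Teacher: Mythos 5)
Your proposal is correct and takes essentially the same route as the paper: split $a=a_1+a_\infty$, apply Theorems~\ref{T:Optimal-Partner-L1-Lorentz} and~\ref{T:Optimal-Partner-Linfty-Lorentz} to the two pieces, add the two bounded maps $S_{a_1}+S_{a_\infty}=S_a$, and deduce optimality from Lemma~\ref{L:Optimal-Space-Lower-Constraint}. Your explicit invocation of Lemma~\ref{L:Optimal-Space-Lower-Constraint} for the kernel $a$ itself (together with checking that $a_1,a_\infty$ are non-negative and non-increasing) is, if anything, a slightly more careful rendering of the same argument the paper states tersely via the optimality of the two summand ranges.
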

\begin{proof}
   From Theorems \ref{T:Optimal-Partner-L1-Lorentz} and \ref{T:Optimal-Partner-Linfty-Lorentz} we have
    \begin{equation*}
        \begin{split}
            &S_{a_1}\colon L^{\xi,\eta} \rightarrow L^{\xi',\eta} \\
            &S_{a_\infty}\colon L^{\xi,\eta} \rightarrow L^{\xi',\eta},
        \end{split}
    \end{equation*}
    respectively, with both of these range spaces being optimal. Now since $a_1+a_\infty=a$, we also have $S_{a_1}+S_{a_\infty}=S_a$, which implies $S_a\colon L^{\xi,\eta} \rightarrow L^{\xi',\eta}$ with $L^{\xi',\eta}$ being the optimal range partner for $S_a$ and $L^{\xi,\eta}$.
\end{proof}

If we set $A=\{r\in [1,\infty),\, a_1 \in L^{r',\infty}\}$, $B=\{r\in (1,\infty],\, a_\infty \in L^{r',\infty}\}$, then it is easy to see that $A$, $B$ are some intervals. Theorem \ref{T:Optimal-Partner-general-Lorentz} gives the optimality of $S_a\colon L^{\xi,\eta} \rightarrow L^{\xi',\eta}$ for all $\xi \in (A\cap B)^o$. It is also easy to see that if $\xi \not\in A\cap B$, then there is no range partner for $S_a$ and $L^{\xi',\eta}$ simply because there is no hope of $a^{**}\in L^{\xi,\eta}$. The problematic case is when $A\cap B = [p,q]$ for some $p,q\in(1,\infty)$ and $\xi=p$ or $\xi=q$. It is not hard, however, to find an example that shows that a similar result in this case does not hold. If one considers $a(t) = t^{-\tfrac{1}{p'}}\chi_{(0,1)}$, $t>0$, then we have $A = [p,\infty)$, $B=(1,\infty]$, thus $A \cap B = [p,\infty)$. Setting $\xi=p$, $\eta=1$ we have $a^{**}\in L^{\xi',\eta'}=L^{p',\infty}$ and so by Corollary \ref{C:Optimal-Space-Using-a} we know that the optimal space is given by norm associate to
\begin{equation*}
    \lVert S_a f^* \rVert_{p',\infty} = \sup_{t>0} t^{\tfrac{1}{p'}}t^{-\tfrac{1}{p'}}
    \int_0^{\tfrac{1}{t}} f^*(s)s^{\tfrac{1}{p'}} = \lVert f \rVert_{p,1}.
\end{equation*}
This means that for this specific $a$ we have the optimal result $S_a\colon L^{\xi,\eta} \to L^{\xi',\eta'}$. All we can say, using \eqref{E-R-1-estimate-edge-case}, \eqref{E-R-infty-estimate-edge-case} and \eqref{E:Rho-in-Lorentz-Lower-Estimate} and the same approach as in the preceding theorems, is the following. If $A\cap B = [p,q]$, then the optimal range partner $X_{\text{opt}}^p$ for $S_a$ and $L^{p,1}$ satisfies $L^{p',1}\hra X_{\text{opt}}^p \hra L^{p',\infty}$. In that case, the optimal range partner $X_{\text{opt}}^q$ for $S_a$ and $L^{q,1}$ satisfies $L^{q',1}\hra X_{\text{opt}}^q \hra L^{q',\infty}$.

The preceding assertions sum up the boundedness and optimality properties of $S_a$ on the scale of Lorentz spaces $L^{\xi,\eta}$. It is a simple observation that as long as $a$ is non-trivial, $a^{**} \not\in L^1$ and therefore, by Theroem \ref{T:Range-partner-condition}, there is no range partner for $S_a$ and $L^\infty$. Combined with the preceding results and denoting by $p = \inf A$, $q=\sup B$, we know the following.
\begin{itemize}
    \item [(i)] If $\xi\in(p,q)$, $\eta\in[1,\infty]$, then a range partner for $S_a$ and the domain space $L^{\xi,\eta}$ exists. If that is the case, then the optimal range partner is the space $L^{\xi',\eta}$.
    \item[(ii)] If $\xi\in(1,\infty)$, $\xi \not\in [p,q]$, $\eta\in[1,\infty]$, then no range partner for $S_a$ and the domain space $L^{\xi,\eta}$ exists.
    \item[(iii)] If $p\not\in A$ or $q\not\in B$ and $\xi=p$ or $\xi=q$ respecitvely, then no range partner for $S_a$ and the domain space $L^{\xi,\eta}$ exists.
    \item[(iv)] If $p=\infty$ (or equivalently $A=\emptyset$), then no range partner for $S_a$ and the domain space $L^{\xi,\eta}$ exists.
    \item[(v)] $S_a\colon L^1 \to L^\infty$ if and only if $a\in L^\infty$ (or equivalently $a_1\in L^\infty$ or equivalently $p=1$), if that is the case, then $L^\infty$ is the optimal range partner for $S_a$ and $L^1$
    \item[(vi)] If $q=-\infty$ (or equivalently $B=\emptyset$), then there is no rearrangement-invariant domain Lorentz space $Y\not = L^\infty$, for which a range partner exists.
    \item[(vii)] If $\xi=\eta=\infty$ then no range partner for $S_a$ and the domain space $L^{\xi,\eta}=L^\infty$ exists.
\end{itemize}
The only points perhaps not explained above are (v) and (vi). Theorem \ref{T:boundedness-of-Sa-general} gives $S_a\colon L^1\to L^\infty$ if $a\in L^\infty$, whereas the optimality of this result is a consequence of Lemma \ref{L:Optimal-Space-Lower-Constraint} and the fact that
\[
\left\lVert \int_0^{\tfrac{1}{t}} f^* \right\rVert_\infty = \lVert f \rVert_1.
\]
If $a\not\in L^\infty$ then also $a^{**}\not\in L^\infty$, thus by Theorem \ref{C:Optimal-Space-Using-a} there is no range partner for $S_a$ and $L^1$. We have shown that (v) holds. To show (vi) it suffices to show that $a^{**} \not\in Y$, this is, however, a direct consequence of $q=-\infty$.

\paragraph{Acknowledgements}
I would like to thank the referees for careful reading of the paper and many valuable suggestions. I would also like to thank Lubo\v{s} Pick for stimulating discussions.


\begin{thebibliography}{10}

\bibitem{Andersen2}
K.~F. Andersen.
\newblock On {H}ardy's inequality and {L}aplace transforms in weighted
  rearrangement invariant spaces.
\newblock {\em Proc. Amer. Math. Soc.}, 39(2):295--299, 1973.

\bibitem{Andersen}
K.~F. Andersen.
\newblock Weighted generalized {H}ardy inequalities for nonincreasing
  functions.
\newblock {\em Canad. J. Math.}, 43(6):1121--1135, 1991.

\bibitem{Bennett}
C.~Bennett.
\newblock Estimates for weak-type operators.
\newblock {\em Bull. Amer. Math. Soc.}, 79:933--935, 1973.

\bibitem{BS}
C.~Bennett and R.~Sharpley.
\newblock {\em Interpolation of operators}, volume 129 of {\em Pure and Applied
  Mathematics}.
\newblock Academic Press, Inc., Boston, MA, 1988.

\bibitem{BoSh}
A.~Boumenir and A.~Al-Shuaibi.
\newblock The inverse {L}aplace transform and analytic pseudo-differential
  operators.
\newblock {\em J. Math. Anal. Appl.}, 228:16--36, 1998.

\bibitem{Bravo}
O.~G. Bravo.
\newblock On the optimal domain of the {L}aplace transform.
\newblock {\em Bull. Malays. Math. Sci. Soc.}, 40(1):389--408, 2017.

\bibitem{BEP}
E.~Buri\'{a}nkov\'{a}, D.~E. Edmunds, and L.~Pick.
\newblock Optimal function spaces for the {L}aplace transform.
\newblock {\em Rev. Mat. Complut.}, 30(3):451--465, 2017.

\bibitem{Carro}
M.~J. Carro and C.~Ortiz-Caraballo.
\newblock Boundedness of integral operators on decreasing functions.
\newblock {\em Proc. Roy. Soc. Edinburgh Sect. A}, 145(4):725--744, 2015.

\bibitem{CM1}
A.~Cianchi and V.~Musil.
\newblock Optimal domain spaces in {O}rlicz-{S}obolev embeddings.
\newblock {\em Indiana Univ. Math. J.}, 68(3):925--966, 2019.

\bibitem{CP-TAMS}
A.~Cianchi and L.~Pick.
\newblock Optimal {S}obolev trace embeddings.
\newblock {\em Trans. Amer. Math. Soc.}, 368(12):8349--8382, 2016.

\bibitem{Cianchi}
A.~Cianchi, L.~Pick, and L.~Slav\'{\i}kov\'{a}.
\newblock Higher-order {S}obolev embeddings and isoperimetric inequalities.
\newblock {\em Adv. Math.}, 273:568--650, 2015.

\bibitem{CR1}
G.~P. Curbera and W.~J. Ricker.
\newblock Optimal domains for kernel operators via interpolation.
\newblock {\em Math. Nachr.}, 244:47--63, 2002.

\bibitem{CMMP}
M.~\'{C}wikel, A.~Kami\'{n}ska, L.~Maligranda, and L.~Pick.
\newblock Are generalized {L}orentz ``spaces'' really spaces?
\newblock {\em Proc. Amer. Math. Soc.}, 132(12):3615--3625, 2004.

\bibitem{CwPu}
M.~Cwikel and E.~Pustylnik.
\newblock Sobolev type embeddings in the limiting case.
\newblock {\em J. Fourier Anal. Appl.}, 4(4-5):433--446, 1998.

\bibitem{DS}
O.~Delgado and J.~Soria.
\newblock Optimal domain for the {H}ardy operator.
\newblock {\em J. Funct. Anal.}, 244(1):119--133, 2007.

\bibitem{EKP}
D.~E. Edmunds, R.~Kerman, and L.~Pick.
\newblock Optimal {S}obolev imbeddings involving rearrangement-invariant
  quasinorms.
\newblock {\em J. Funct. Anal.}, 170(2):307--355, 2000.

\bibitem{EMMP}
D.~E. Edmunds, Z.~Mihula, V.~Musil, and L.~Pick.
\newblock Boundedness of classical operators on rearrangement-invariant spaces.
\newblock {\em J. Funct. Anal.}, available online, DOI:
  https:doi.org/10.1016/j.jfa.2019.108341.

\bibitem{Ericsson}
S.~Ericsson.
\newblock Exact descriptions of some {$K$} and {$E$} functionals.
\newblock {\em J. Approx. Theory}, 90(1):75--87, 1997.

\bibitem{KP2}
R.~Kerman and L.~Pick.
\newblock Optimal {S}obolev imbedding spaces.
\newblock {\em Studia Math.}, 192(3):195--217, 2009.

\bibitem{Kerman}
R.~A. Kerman.
\newblock Function spaces continuously paired by operators of convolution type.
\newblock {\em Canad. Math. Bull.}, 22(4):499--507, 1979.

\bibitem{Lai}
S.~Lai.
\newblock Weighted norm inequalities for general operators on monotone
  functions.
\newblock {\em Trans. Amer. Math. Soc.}, 340(2):811--836, 1993.

\bibitem{MW}
J.~G. McWhirter and E.~R. Pike.
\newblock On the numerical inversion of the {L}aplace transform and similar
  fredholm integral equations of the first kind.
\newblock {\em J. Phys. A: Math. Gen.}, 11(9):1792--1745, 1978.

\bibitem{Milman}
M.~Milman.
\newblock Interpolation of operators of mixed weak-strong type between
  rearrangement invariant spaces.
\newblock {\em Indiana Univ. Math. J.}, 28(6):985--992, 1979.

\bibitem{Mus1}
V.~Musil.
\newblock Fractional maximal operator in {O}rlicz spaces.
\newblock {\em J. Math. Anal. Appl.}, 474(1):94--115, 2019.

\bibitem{Mus2}
V.~Musil and R.~O\soft{l}hava.
\newblock Interpolation theorem for {M}arcinkiewicz spaces with applications to
  {L}orentz gamma spaces.
\newblock {\em Math. Nachr.}, 292(5):1106--1121, 2019.

\bibitem{N}
R.~O'Neil.
\newblock Integral transforms and tensor products on {O}rlicz spaces and
  ${L}(p,q)$ spaces.
\newblock {\em J. Analyse Math.}, 21:1--276, 1968.

\bibitem{P}
D.~Pe{\v{s}}a.
\newblock Reduction principle for a certain class of kernel-type operators.
\newblock {\em arXiv:1908.06313}, to appear in Math. Nachr.

\bibitem{PKJF}
L.~Pick, A.~Kufner, O.~John, and S.~Fu\v{c}\'{\i}k.
\newblock {\em Function spaces. {V}ol. 1}, volume~14 of {\em De Gruyter Series
  in Nonlinear Analysis and Applications}.
\newblock Walter de Gruyter \& Co., Berlin, extended edition, 2013.

\bibitem{Sharpley}
R.~Sharpley.
\newblock Counterexamples for classical operators on {L}orentz--{Z}ygmund
  spaces.
\newblock {\em Studia Math.}, 68:141--158, 1986.

\bibitem{STZ}
F.~Sukochev, K.~Tulenov, and D.~Zanin.
\newblock The optimal range of the {C}alder\`on operator and its applications.
\newblock {\em J. Funct. Anal.}, 277(10):3513--3559, 2019.

\bibitem{Torchinsky}
A.~Torchinsky.
\newblock The {$K$}-functional for rearrangement invariant spaces.
\newblock {\em Studia Math.}, 64(2):175--190, 1979.

\end{thebibliography}
\end{document}